\numberwithin{equation}{section}
\theoremstyle{plain}
\newtheorem{Th}{Theorem}[section]
\newtheorem{Lemma}[Th]{Lemma}
\newtheorem{Prop}[Th]{Proposition}
 \theoremstyle{definition}
\newtheorem{Def}[Th]{Definition}
\newtheorem{Conj}[Th]{Conjecture}
\newtheorem{Rem}[Th]{Remark}
\newtheorem{?}[Th]{Problem}
\newtheorem{Ex}[Th]{Example}
\begin{document}

\title{A combinatorial formula for the Ehrhart $h^*$-vector of the hypersimplex}

\author{Donghyun Kim}

\email{donghyun\_kim@berkeley.edu}



\begin{abstract} We give a combinatorial formula for the Ehrhart $h^*$-vector of the hypersimplex. In particular, we show that $h^{*}_{d}(\Delta_{k,n})$ is the number of hypersimplicial decorated ordered set partitions of type $(k,n)$ with winding number $d$, thereby proving a conjecture of N. Early. We do this by proving a more general conjecture of N. Early on the Ehrhart $h^*$-vector of a generic cross-section of a hypercube.   
\end{abstract}

\maketitle

\section{Introduction} For two integers $0<k<n$, the $(k,n)$\textit{-th hypersimplex} is defined to be 
$$\Delta_{k,n}=\{(x_1,\cdots,x_n) \in \mathbb{R}^n \mid 0 \leq x_i \leq 1,   x_1+\cdots+x_n=k\}.$$

It is an $(n-1)$-dimensional polytope inside $\mathbb{R}^n$ whose vertices are (0,1)-vectors with exactly $k$ 1's. In particular it is an integral polytope. The hypersimplex can be found in several algebraic and geometric contexts, for example, as a moment polytope for the torus action on the Grassmannian, or as a weight polytope for the fundamental representation of $GL_n$.

For an $n$-dimensional integral polytope $\mathcal{P} \subset \mathbb{R}^N$, it is well known from Ehrhart theory that the map $r \rightarrow |r\mathcal{P} \cap \mathbb{Z}^N|$ is a polynomial function in $r$ of degree $n$, which we call \textit{Ehrhart polynomial}, and corresponding \textit{Ehrhart series} $\sum_{r=0}^{\infty} |r\mathcal{P} \cap \mathbb{Z}^N|t^r$ is a rational function of the form
$$\sum_{r=0}^{\infty} |r\mathcal{P} \cap \mathbb{Z}^N|t^r=\frac{h^{*}(t)}{(1-t)^{n+1}},$$
such that $h^{*}(t)$ is a polynomial of degree $\leq n$ (see \cite{EC1}). Define $h^{*}_d$ to be the coefficient of $t^d$ in $h^{*}(t)$. The vector $(h^{*}_0,\cdots,h^{*}_{n})$ is called the Ehrhart $h^{*}$-vector of $\mathcal{P}$ and $h^{*}(t)$ is called the $h^{*}$-polynomial of $\mathcal{P}$. A standard result from Ehrhart theory is that $\sum\limits_{i=0}^{n} h^{*}_i$ equals the normalized volume of $\mathcal{P}$. 

For a permutation $w \in S_n$, we say $i \in [n-1]$ is a $descent$ of $w$ if $w(i)>w(i+1)$ and define $des(w)$ to be the number of descents of $w$. The $Eulerian$ number $A_{k,n-1}$ is the number of $w \in S_{n-1}$ with $des(w)=k-1$. A well-known fact about the hypersimplex $\Delta_{k,n}$ is that its normalized volume is $A_{k,n-1}$ (see \cite{Stan}). So we have 
$$\sum_{d=0}^{n-1} h^{*}_d(\Delta_{k,n})=A_{k,n-1}.$$

In general, the entries of the $h^{*}$-vector of an integral polytope are nonnegative integers (see \cite{Sta}). It has been an open problem for some time to give a combinatorial interpretation of $h^{*}_{d}(\Delta_{k,n})$. In \cite{Li}, N. Li gave a combinatorial interpretation of $h^{*}_{d}(\Delta^{'}_{k,n})$, where $\Delta^{'}_{k,n}$ is the hypersimplex with the lowest facet removed, using permutations $w \in S_{n-1}$ and their descents, excedances, and covers. In \cite{Early1}, N. Early conjectured a combinatorial interpretation for $h^{*}_{d}(\Delta_{k,n})$ using hypersimplicial decorated ordered set partitions of type $(k,n)$.  

In \cite{Katz}, Katzman computed the Hilbert series of algebras of Veronese type, which gives a formula for the Ehrhart series of the hypersimplex $\Delta_{k,n}$ as a special case. The formula is
   \begin{equation} \label{1} \frac{\sum\limits_{i\geq0} (-1)^{i}\binom{n}{i} \left({(\sum\limits_{j \geq 0} \binom{i}{j} (t-1)^j (\sum\limits_{l \geq 0} \binom{n-j}{l(k-i)}_{\hspace{-1mm}k-i} t^l)}\right) }{(1-t)^n} 
   \end{equation}
where the notation $\binom{n}{b}_a$ means the coefficient of $t^b$ in $(1+t+\cdots+t^{a-1})^n$. For example, when $a=2$, it becomes an ordinary binomial coefficient. The numerator of (\ref{1}) is the $h^{*}$-polynomial of the hypersimplex $\Delta_{k,n}$, thus giving an explicit formula for its $h^{*}$-vector. However, it doesn't give a combinatorial or manifestly positive formula for the $h^{*}$-vector.  

In this paper, we prove N. Early's conjecture by relating it to (\ref{1}). We now explain  the conjecture. A \textit{decorated ordered set partition} $((L_1)_{l_1},\cdots,(L_m)_{l_m})$ of type $(k,n)$ consists of an ordered partition $(L_1,\cdots,L_m)$ of $\{1,2,...,n\}$ and an $m$-tuple $(l_1,\cdots,l_m) \in \mathbb{Z}^m$ such that $l_1+\cdots+l_m=k$ and $l_i \geq 1$. We call each $L_i$ a $block$ and we place them on a circle in a clockwise fashion then think of $l_i$ as the clockwise distance between adjacent blocks $L_i$ and $L_{i+1}$ (indices are considered modulo $m$). So the circumference of the circle is $l_1+\cdots+l_m=k$. We regard decorated ordered set partitions up to cyclic rotation of blocks (together with corresponding $l_i$). For example, decorated ordered set partition $(\{1,2,7\}_2,\{3,5\}_3,\{4,6\}_1)$ is same as $(\{3,5\}_3,\{4,6\}_1,\{1,2,7\}_2)$. A decorated ordered set partition is called \textit{hypersimplicial} if it satisfies $1 \leq l_i \leq |L_i|-1$ for all $i$. For the motivation and more background on decorated ordered set partitions, see \cite{Early2}.

\begin{Ex} \label{ex1}
Consider a decorated ordered set partition $(\{1,2,7\}_2,\{3,5\}_3,\{4,6\}_1)$ of type (6,7) (see Figure \ref{fig1}). This is not hypersimplicial as $3>|\{3,5\}|-1$.

By inserting empty spots, we can encode the distance information. For example, the (clockwise) distance between $\{1,2,7\}$ and $\{3,5\}$ is 2 so we insert one empty spot on the circle between those blocks. The distance between $\{3,5\}$ and $\{4,6\}$ is 3 so we insert two empty spots. We obtain the figure on the right as a result. Including empty spots, there will be $k=6$ spots total. 

\end{Ex}
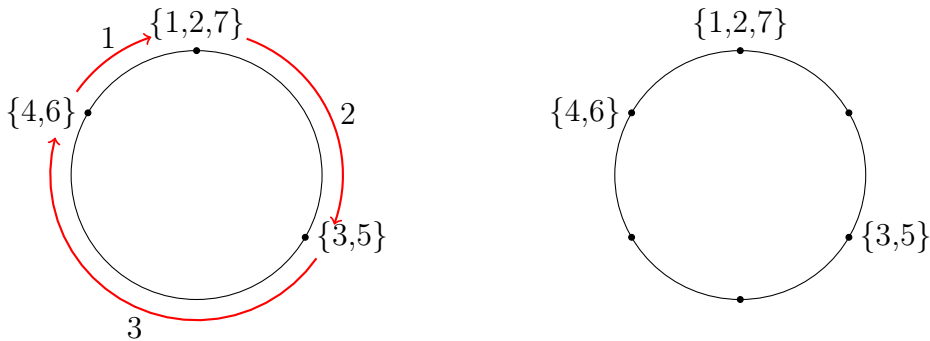
\begin{figure}[ht] 
\begin{tikzpicture}[scale=0.55]
\filldraw[black] (0,3) circle (2pt) node[anchor=south] {\{1,2,7\}};
\filldraw[black] (2.598,-1.5) circle (2pt) node[anchor=west] {\{3,5\}};
\filldraw[black] (-2.598,1.5) circle (2pt) node[anchor=east] {\{4,6\}};
\draw (0,0) circle (3);

\filldraw[black] (5+8,3) circle (2pt) node[anchor=south] {\{1,2,7\}};
\filldraw[black] (5+10.598,1.5) circle (2pt) node[anchor=west] {};
\filldraw[black] (5+10.598,-1.5) circle (2pt) node[anchor=west] {\{3,5\}};
\filldraw[black] (5+8,-3) circle (2pt) node[anchor=north] {};
\filldraw[black] (5+8-2.598,-1.5) circle (2pt) node[anchor=east] {};
\filldraw[black] (5+8-2.598,1.5) circle (2pt) node[anchor=east] {\{4,6\}};
\draw (5+8,0) circle (3);

\draw [red,thick,<-,domain=-20:70] plot ({3.5*cos(\x)}, {3.5*sin(\x)});
\draw [red,thick,->,domain=145:108] plot ({3.5*cos(\x)}, {3.5*sin(\x)});
\draw [red,thick,<-,domain=165:325] plot ({3.5*cos(\x)}, {3.5*sin(\x)});
\filldraw[black] (3.672-0.5,9-7.520) circle (0.00001pt) node[anchor=west] {2};
\filldraw[black] (-8.5+7.0208360839,9-12.172077255
) circle (0.00001pt) node[anchor=north] {3};
\filldraw[black] (-8.5+6.393647419
,9-6.2047757148
) circle (0.00001pt) node[anchor=south] {1};

\end{tikzpicture}
\caption{The figure on the left is the picture associated to the decorated ordered set partition $(\{1,2,7\}_2,\{3,5\}_3,\{4,6\}_1)$. The figure on the right is the picture obtained after inserting empty spots.} \label{fig1}
\end{figure}

Given a decorated ordered set partition, we define the \textit{winding vector} and the \textit{winding number}. To define the winding vector, let $w_i$ be the distance of the path starting from the block containing $i$ to the block containing $(i+1)$ moving clockwise (where $i$ and $(i+1)$ are considered modulo $n$). If $i$ and $(i+1)$ are in the same block then $w_i=0$. In Figure \ref{fig1}, the winding vector is $w=(0,2,3,3,3,1,0)$.

The total length of the path is $(w_1+\cdots+w_n)$, which should be a multiple of $k$ as we started from 1 and came back to 1 moving clockwise. If $(w_1+\cdots+w_n)=kd$, then we define the winding number to be $d$. In Figure \ref{fig1}, the winding number is 2. 

\begin{Rem}It is known that hypersimplicial decorated ordered set partitions of type $(k,n)$ are in bijection with $w \in S_{n-1}$ such that $des(w)=k-1$ (see \cite{Ocn}). 
\end{Rem}
Now we will state the conjectures of N. Early. 

\begin{Conj} [\cite{Early1}, Conjecture 1] \label{conj1}
The number of hypersimplicial decorated ordered set partitions of type $(k,n)$ with winding number $d$ is $h^{*}_d(\Delta_{k,n})$. 
\end{Conj}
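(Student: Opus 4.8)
The plan is to deduce Conjecture~\ref{conj1} from a statement valid for an arbitrary positive integer vector $c=(c_1,\dots,c_n)$, of which the hypersimplex is the degenerate instance $c=(1,\dots,1)$. Set $\nabla(c)=\{x\in\mathbb{R}^n : 0\le x_i\le c_i,\ \sum_i x_i=k\}$: this is a lattice polytope, and rescaling the $i$-th coordinate identifies it with a cross-section of the unit cube, so the ``generic cross-section of a hypercube'' of the abstract is (a lattice model of) $\nabla(c)$ for $c$ in general position. The key point is that Katzman's computation of the Hilbert series of algebras of Veronese type gives the Ehrhart series of $\nabla(c)$ for \emph{every} $c$, with formula (\ref{1}) the instance $c=(1,\dots,1)$. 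Early's general conjecture attaches to each $\nabla(c)$ a combinatorial model --- decorated ordered set partitions of ``type $(c,k)$'', with the hypersimplicial constraint $1\le l_i\le|L_i|-1$ replaced by a $c$-weighted analogue and a winding number defined analogously --- and predicts that the number of these with winding number $d$ equals $h^{*}_d(\nabla(c))$. I would state this general conjecture, prove it, and then set $c=(1,\dots,1)$.

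To prove the general identity I would compute the generating function $\sum_{d\ge 0}N_d(c)\,t^d$, where $N_d(c)$ is the number of type-$(c,k)$ objects with winding number $d$, and match it with the numerator of Katzman's formula. All of the work is in this matching: that numerator is an alternating triple sum $\sum_i(-1)^i\binom{n}{i}\bigl(\sum_j\binom{i}{j}(t-1)^j(\cdots)\bigr)$, whose outer alternation is an inclusion--exclusion over which upper-bound facets $x_i\le c_i$ are exceeded and whose inner factor $(t-1)^j$ is a second inclusion--exclusion (passing between a closed and a half-open decomposition). So I expect to reconcile it with the manifestly nonnegative $N_d(c)$ by a sign-reversing involution: build a signed set of decorated configurations carrying a collection of ``defects'', weighted so that the total weight is exactly the numerator of (\ref{1}), and pair up all non-fixed configurations, leaving precisely the genuine type-$(c,k)$ objects, each surviving with weight $t^{d}$ where $d$ is its winding number. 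A geometric packaging of the same cancellation would be to realize the type-$(c,k)$ objects as the lattice points lying in the half-open fundamental parallelepipeds (one per simplex) of a unimodular triangulation of the cone over $\nabla(c)$ --- a $c$-weighted analogue of the ``permutation triangulation'' of $\Delta_{k,n}$ --- with the winding number recording the height coordinate; the standard half-open-decomposition machinery of Ehrhart theory then reads off $h^{*}(\nabla(c))$ directly, and the comparison with Katzman's formula becomes a consistency check. Genericity of $c$ enters only to make the correspondence a genuine bijection, with no automorphisms to quotient by; as a sanity check, summing over $d$ should recover the normalized volume of $\nabla(c)$.

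The final step is the specialization $c\to(1,\dots,1)$. On the analytic side Katzman's formula becomes (\ref{1}); on the combinatorial side the $c$-weighted model collapses onto ordinary decorated ordered set partitions of type $(k,n)$, and one must verify that this collapse is compatible with the ``up to cyclic rotation'' identification --- equivalently, that the $\mathbb{Z}/n$ symmetry of $\Delta_{k,n}$, invisible when $c$ is generic, is exactly absorbed --- so that the number of hypersimplicial decorated ordered set partitions of type $(k,n)$ with winding number $d$ equals $h^{*}_d(\Delta_{k,n})$, which is Conjecture~\ref{conj1}; the degree-zero term (the single trivial partition, $h^{*}_0=1$) and the total count $A_{k,n-1}$ provide consistency checks. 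I expect the involution of the second step --- choosing the right signed set and matching it term by term against the threefold sum in (\ref{1}) --- to be the main obstacle; the reduction to the general statement and the degeneration at the end are comparatively structural, although the bookkeeping of the cyclic quotient in the degeneration also requires care.
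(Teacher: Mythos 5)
Your overall architecture coincides with the paper's: deduce Conjecture \ref{conj1} from a more general statement (the paper proves Early's Conjecture \ref{conj2} for $I_{r,k}^{n}$, i.e.\ the case $c=(r,\dots,r)$ of your $\nabla(c)$), extract the $h^*$-vector from Katzman's Hilbert-series formula, and match it against a signed count of the combinatorial objects. But the proposal stops exactly where the proof has to start. What you defer as ``the main obstacle'' --- building the signed set of defective configurations and cancelling it down to the genuine objects --- is the entire content of the paper: first the collapse of the inner $(t-1)^j$ sum in Katzman's triple sum (Lemma \ref{lem1} and Proposition \ref{prop1}), without which there is no usable closed target such as $h^{*}_d(I_{r,k}^{n})=\sum_{i}(-1)^i\binom{n}{i}\binom{n}{(k-ri)d-i}_{k-ri}$ in (\ref{5}); then the inclusion--exclusion over sets of $r$-bad blocks (the sets $K_r(S)$, $H_r(T)$ and Proposition \ref{prop3}), the ``spreading'' bijection replacing a bad block by a packed run of singletons (Lemma \ref{lem3}, where normalizing so that $n\notin T$ via the cyclic shift of Lemma \ref{lem2} is essential), the M\"obius-type cancellation over partitions of $T$ (Proposition \ref{prop4}), and the second-winding-vector bijection (Proposition \ref{prop5}) that evaluates $H_r(T)=(-1)^{|T|}\binom{n}{(k-r|T|)d-|T|}_{k-r|T|}$. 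None of these constructions, nor the alternative unimodular triangulation of the cone over $\nabla(c)$ with winding number as box-point height, is exhibited or proved in the proposal; as it stands the argument is a plan with the central step missing, not a proof.

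Two further concrete problems with the plan itself. First, you interpret ``generic cross-section of a hypercube'' as $\nabla(c)$ for a generic bound vector $c$, which is strictly more general than the paper's $I_{r,k}^n$; Katzman's formula does cover general bounds, but you would then have to formulate and prove a statement stronger than Conjecture \ref{conj2}, and your sketch leans on genericity of $c$ (``no automorphisms to quotient by'', a triangulation in general position) in a way that is incompatible with the final specialization $c\to(1,\dots,1)$: neither $h^*$-vectors nor the combinatorial counts vary continuously in $c$, so the degenerate case must be covered by the proof itself rather than reached as a limit. Second, the worry about a $\mathbb{Z}/n$ symmetry being ``absorbed'' at $c=(1,\dots,1)$ is a misunderstanding: decorated ordered set partitions are taken up to cyclic rotation of the blocks on the circle for every type, and no quotient by relabeling of $\{1,\dots,n\}$ occurs anywhere in the statement; in the paper the relabeling symmetry enters only as a normalization device (Lemma \ref{lem2}) to assume $n\notin T$, a step your proposal has no substitute for.
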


Next we will state a more general version of Conjecture \ref{conj1} for a generic cross section of a hypercube. 

\begin{Def} For positive integers $r,k,$ and $n$, the \textit{generic cross section of a hypercube} is
$$I_{r,k}^{n}=\{(x_1,\cdots,x_n) \in [0,r]^n \mid \sum_{i=1}^{n} x_i =k \}.$$
\end{Def}

When $r=1$, it is the hypersimplex $\Delta_{k,n}$.

\begin{Def}
A decorated ordered set partition $P=((L_1)_{l_1},\cdots,(L_m)_{l_m})$ is $r$-$hypersimplicial$ if $ 1 \leq l_i \leq r|L_i|-1$ for all $i$.
\end{Def}

Note that the notions of hypersimplicial and 1-hypersimplicial are equivalent. The decorated ordered set partition $(\{1,2,7\}_2,\{3,5\}_3,\{4,6\}_1)$ in Example \ref{ex1} is not hypersimplicial, but it is $r$-hypersimplicial for $r \geq 2$.

\begin{Conj} [\cite{Early1}, Conjecture 6] \label{conj2}
The number of $r$-hypersimplicial decorated ordered set partitions of type $(k,n)$ with winding number $d$ is $h_{d}^{*}(I_{r,k}^{n})$.
\end{Conj}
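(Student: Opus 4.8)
The plan is to prove the stronger polynomial identity
$$\sum_{d\ge 0}h^{*}_{d}(I_{r,k}^{n})\,x^{d}\;=\;\sum_{d\ge 0}N^{(r)}_{d}(k,n)\,x^{d},$$
where $N^{(r)}_{d}(k,n)$ is the number of $r$-hypersimplicial decorated ordered set partitions of type $(k,n)$ with winding number $d$; Conjecture~\ref{conj1} is then the case $r=1$, in which the left-hand side is the numerator of (\ref{1}). (The degenerate cases $k\ge rn$ are immediate, so assume $0<k<rn$.) The idea is to bring both sides into the explicit signed-binomial shape of (\ref{1}) and match them; throughout I write $x$ for the $h^{*}$-variable and $t$ for the dilation factor, since $r$ and the box size are already in use.

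\emph{The Ehrhart side.} As $I_{r,k}^{n}$ is an integral polytope of dimension $n-1$, its $h^{*}$-polynomial is $(1-x)^{n}\sum_{t\ge 0}E(t)\,x^{t}$, where
$$E(t)=\#\{y\in\mathbb{Z}^{n}:0\le y_{i}\le rt,\ y_{1}+\cdots+y_{n}=kt\}=[q^{kt}]\Bigl(\tfrac{1-q^{rt+1}}{1-q}\Bigr)^{n}.$$
Expanding $(1-q^{rt+1})^{n}=\sum_{i}(-1)^{i}\binom{n}{i}q^{i(rt+1)}$ against $(1-q)^{-n}=\sum_{s}\binom{n-1+s}{s}q^{s}$, summing the resulting series over $t$, and clearing $(1-x)^{n}$ is precisely Katzman's computation carried out with the box bound $rt$ in place of $1$; it yields an explicit formula for $h^{*}(I_{r,k}^{n})$, an $r$-analogue of (\ref{1}) that specializes to (\ref{1}) at $r=1$. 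One technical point: each $i$-summand of $E(t)$ agrees with a polynomial in $t$ only for $t$ in a certain range, so these small-$t$ corrections have to be carried along in order to see that the resulting $h^{*}$ has degree $\le n-1$.

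\emph{The combinatorial side.} I would encode a decorated ordered set partition of type $(k,n)$ as a \emph{configuration}: a map $\pi\colon[n]\to\mathbb{Z}/k\mathbb{Z}$, with blocks the nonempty fibres and $l_{i}$ the clockwise gap from one occupied residue to the next, taken up to rotation of $\mathbb{Z}/k\mathbb{Z}$. Rotation acts freely on all such maps, so every orbit has size exactly $k$ and $N^{(r)}_{d}(k,n)=\tfrac1k\#\{r\text{-hypersimplicial configurations with winding }d\}$. Cutting the circle at element $1$ records a configuration by data $(p;w_{1},\dots,w_{n})$ with $p,w_{i}\in\{0,1,\dots,k-1\}$ and $w_{1}+\cdots+w_{n}=kd$; hence, dropping the upper bounds $l_{i}\le r|L_{i}|-1$ and keeping only $l_{i}\ge 1$ (``weak'' configurations) makes the winding generating function collapse to the clean coefficient $[q^{kd}]\bigl(\tfrac{1-q^{k}}{1-q}\bigr)^{n}$. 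The upper bounds are then reinstated by inclusion--exclusion over the ``oversized'' blocks, those with $l_{i}\ge r|L_{i}|$: on a chosen oversized block one substitutes $l_{i}=r|L_{i}|+l_{i}'$ with $l_{i}'\ge 0$, which contributes a sign and a controlled shift of the winding number, producing a signed-sum formula for $N^{(r)}_{d}(k,n)$.

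\emph{Matching, and the main obstacle.} The crux is to identify the two signed sums. Katzman's expansion is indexed by subsets of a fixed $n$-element ground set (the $\binom{n}{i}$, $\binom{i}{j}$, and box coefficients $\binom{m}{b}_{a}$ in (\ref{1})), whereas the combinatorial inclusion--exclusion is indexed by oversized blocks, a structure that is not rigid and whose very definition depends on the configuration; translating one indexing into the other — in particular, manufacturing the factor $(x-1)^{j}$ of (\ref{1}) on the combinatorial side — is where I expect the genuine difficulty to lie. I anticipate that this needs an intermediate bijective model together with a sign-reversing involution that cancels all spurious contributions, leaving positively exactly the $r$-hypersimplicial configurations weighted by $x^{\mathrm{wind}}$, and that this same involution simultaneously absorbs the small-$t$ corrections flagged above. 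Granting the match, specialization to $r=1$ gives Conjecture~\ref{conj1}.
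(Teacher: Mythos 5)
Your high-level plan is the same as the paper's (an explicit signed formula for $h^{*}_d(I_{r,k}^{n})$ coming from the Veronese/Katzman expansion, an inclusion--exclusion over oversized blocks on the combinatorial side, and a term-by-term match), but the proposal stops exactly where the proof has to happen, and on both sides. On the Ehrhart side you end with ``an $r$-analogue of (\ref{1})'' plus unresolved ``small-$t$ corrections''; no closed formula is produced. The paper takes Katzman's series (\ref{2}) as given and proves the identity of Proposition \ref{prop1} (by induction, using Lemma \ref{lem1}) to collapse the inner sum $\sum_j\binom{i}{j}(t-1)^j(\cdots)$, arriving at the closed form (\ref{5}), $h^{*}_d(I_{r,k}^{n})=\sum_i(-1)^i\binom{n}{i}\binom{n}{(k-ri)d-i}_{k-ri}$. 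The $(x-1)^j$ factor you single out as the obstacle is eliminated there, purely algebraically --- no combinatorial involution is needed for it.

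The deeper gap is on the combinatorial side. You correctly diagnose that the oversized blocks do not form a rigid index set, and then write ``I anticipate that this needs an intermediate bijective model together with a sign-reversing involution \dots Granting the match \dots''; that anticipated construction is the entire content of the paper's Section 2.2 and is absent from your argument. Concretely, what is missing is: the reorganization of the inclusion--exclusion as a sum over pairs $(T,S)$ with $T\subseteq\{1,\dots,n\}$ and $S\in UP(T)$, with Proposition \ref{prop3} showing the double sum counts exactly the $r$-hypersimplicial objects; the bijection of Lemma \ref{lem3} replacing prescribed bad blocks by singleton blocks in increasing $r$-packed position (where the normalization $n\notin T$, licensed by Lemma \ref{lem2}, is essential --- without it the winding number is not preserved); the partition-lattice cancellation of Proposition \ref{prop4} giving $H_r(T)=(-1)^{|T|}|\hat{K}_r(T)|$; and the second-winding-vector bijection of Proposition \ref{prop5}, which evaluates $|\hat{K}_r(T)|=\binom{n}{(k-rm)d-m}_{k-rm}$ for $m=|T|$, so that (\ref{6}) matches (\ref{5}) term by term. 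Your proposed local substitution $l_i=r|L_i|+l_i'$ does not by itself control the winding number, nor does it resolve the varying-index-set problem, so as written the combinatorial side never reaches a formula that could be compared with the Ehrhart side. In short: right strategy, but the proof is missing precisely at the step you flagged as the difficulty.
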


Our goal is to prove Conjecture \ref{conj2} and derive Conjecture \ref{conj1} as specializing to $r=1$. 
\section{Proof of Conjecture \ref{conj2}}
\subsection{A simplification of Katzman's formula}

Again using the formula for Hilbert series of algebras of Veronese type (see \cite{Katz}), the Ehrhart series of $I_{r,k}^{n}$ is 
\begin{equation} \label{2} \frac{\sum\limits_{i \geq 0} (-1)^{i}\binom{n}{i} \left(\sum\limits_{j\geq0} \binom{i}{j} (t-1)^j (\sum\limits_{l \geq 0} \binom{n-j}{l(k-ri)}_{\hspace{-1mm}k-ri} t^l)\right)}{(1-t)^n}. 
   \end{equation}

Now we simplify (\ref{2}) to get a simple description for the $h^{*}$-vector of $I_{r,k}^{n}$.

\begin{Lemma} \label{lem1}
For positive integers $n,m$,and $a$, we have $$\binom{n}{m}_{a} -\binom{n}{m-1}_{a}=\binom{n-1}{m}_{a} -\binom{n-1}{m-a}_{a}.$$
\end{Lemma}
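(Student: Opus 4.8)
The plan is to work with the generating-function definition directly. Recall that $\binom{n}{m}_a$ is the coefficient of $t^m$ in $(1+t+\cdots+t^{a-1})^n$, so equivalently $\binom{n}{m}_a = [t^m]\left(\frac{1-t^a}{1-t}\right)^n$. I would introduce the shorthand $f_n(t) = (1+t+\cdots+t^{a-1})^n = \left(\frac{1-t^a}{1-t}\right)^n$, and then translate both sides of the claimed identity into statements about coefficients of $f_n$ and $f_{n-1}$. The left-hand side $\binom{n}{m}_a - \binom{n}{m-1}_a$ is $[t^m]\big(f_n(t) - t f_n(t)\big) = [t^m]\big((1-t)f_n(t)\big)$. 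Similarly, the right-hand side $\binom{n-1}{m}_a - \binom{n-1}{m-a}_a$ is $[t^m]\big(f_{n-1}(t) - t^a f_{n-1}(t)\big) = [t^m]\big((1-t^a)f_{n-1}(t)\big)$.

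The key observation is then simply that
$$(1-t)f_n(t) = (1-t)(1+t+\cdots+t^{a-1})\cdot f_{n-1}(t) = (1-t^a)f_{n-1}(t),$$
since $(1-t)(1+t+\cdots+t^{a-1}) = 1-t^a$ is the basic telescoping/geometric-series identity. Therefore the two polynomials $(1-t)f_n(t)$ and $(1-t^a)f_{n-1}(t)$ are literally equal, hence have the same coefficient of $t^m$ for every $m$, which is exactly the asserted identity. This reduces the whole lemma to one line once the coefficient-extraction dictionary is set up.

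The one point that needs a small amount of care is the bookkeeping of edge cases in the definition of $\binom{n}{b}_a$ as a coefficient: one should check that the convention makes $\binom{n}{b}_a = 0$ for $b<0$ (and for $b$ larger than $n(a-1)$), so that the shifted terms $\binom{n}{m-1}_a$ and $\binom{n-1}{m-a}_a$ are interpreted correctly even when $m-1$ or $m-a$ is negative; with the coefficient-of-$t^m$ convention this is automatic. I do not anticipate any real obstacle here — the identity is essentially a restatement of $1-t^a = (1-t)(1+t+\cdots+t^{a-1})$ after clearing the $(1-t)^n$ denominators — so the main "work" is just presenting the translation cleanly. An alternative, purely combinatorial proof would interpret $\binom{n}{m}_a$ as the number of ways to write $m$ as an ordered sum of $n$ parts each in $\{0,1,\dots,a-1\}$ and set up a sign-reversing involution, but the generating-function argument is shorter and I would present that.
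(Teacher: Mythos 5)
Your proof is correct and is essentially the paper's argument in generating-function language: the paper subtracts the two instances of the recurrence $\binom{n}{m}_a=\sum_{k=0}^{a-1}\binom{n-1}{m-k}_a$ (telescoping the interior terms), which is exactly the coefficient-level form of your identity $(1-t)f_n(t)=(1-t^a)f_{n-1}(t)$ coming from $f_n=(1+t+\cdots+t^{a-1})f_{n-1}$ and $(1-t)(1+t+\cdots+t^{a-1})=1-t^a$. Both proofs rest on the same factorization, and your remark that the convention $\binom{n}{b}_a=0$ for $b<0$ handles the shifted terms is indeed automatic, so nothing further is needed.
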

\begin{proof}
By a combinatorial argument, we have $\binom{n}{m}_a=\sum\limits_{k=0}^{a-1}\binom{n-1}{m-k}_a$ and $\binom{n}{m-1}_a=\sum\limits_{k=0}^{a-1}\binom{n-1}{m-1-k}_a$. Subtracting these two gives the lemma. 
\end{proof}

\begin{Prop} \label{prop1} For positive integers $s$ and $a$, we have
$$\sum\limits_{j \geq 0}\binom{s}{j}(t-1)^j(\sum\limits_{l \geq 0} \binom{n-j}{la}_{\hspace{-1.3mm}a} t^l)=\sum\limits_{l \geq 0} \binom{n}{la-s}_{\hspace{-1.3mm}a} t^l.$$
\end{Prop}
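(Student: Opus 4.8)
The plan is to induct on $s$, treating $n$ and $a$ as arbitrary positive integers, with Lemma \ref{lem1} supplying the inductive step. Abbreviate the left- and right-hand sides of the claimed identity by $L_s^{(n)}(t)$ and $R_s^{(n)}(t)=\sum_{l\ge 0}\binom{n}{la-s}_a t^l$; all sums in sight are finite since $\binom{s}{j}=0$ for $j>s$ and $\binom{n-j}{la}_a=0$ once $la>(n-j)(a-1)$. The base case $s=0$ is immediate: only the $j=0$ term survives on the left, giving $\sum_l\binom{n}{la}_a t^l=R_0^{(n)}(t)$. (The statement is phrased for $s\ge 1$, but including $s=0$ as the base of the induction costs nothing; alternatively one can check $s=1$ by hand, where the identity is again exactly Lemma \ref{lem1}.)

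For the inductive step I would expand $\binom{s+1}{j}=\binom{s}{j}+\binom{s}{j-1}$, split $L_{s+1}^{(n)}(t)$ into two sums accordingly, and re-index the second one by $j\mapsto j+1$ (its $j=0$ term vanishes). The upper binomial index in the second sum then becomes $n-1-j$ instead of $n-j$, and a factor $t-1$ factors out, yielding the recursion
\[
L_{s+1}^{(n)}(t)=L_s^{(n)}(t)+(t-1)\,L_s^{(n-1)}(t).
\]
This is precisely why $n$ is carried along as a free parameter: the induction hypothesis at level $s$ is needed simultaneously for $n$ and for $n-1$. Substituting $L_s^{(n)}=R_s^{(n)}$ and $L_s^{(n-1)}=R_s^{(n-1)}$, the proof reduces to the purely combinatorial recursion $R_s^{(n)}(t)+(t-1)R_s^{(n-1)}(t)=R_{s+1}^{(n)}(t)$.

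To establish this last recursion I would simply read off the coefficient of $t^l$. Since $(t-1)\sum_l\binom{n-1}{la-s}_a t^l=\sum_l\big(\binom{n-1}{la-s-a}_a-\binom{n-1}{la-s}_a\big)t^l$, the coefficient of $t^l$ on the left-hand side is $\binom{n}{la-s}_a+\binom{n-1}{la-s-a}_a-\binom{n-1}{la-s}_a$, which must be matched against $\binom{n}{la-s-1}_a$. That matching is exactly Lemma \ref{lem1} applied with $m=la-s$. The only wrinkle is that Lemma \ref{lem1} is stated for positive $m$, whereas here $m=la-s$ may be $\le 0$; but if $la-s<0$ all four binomials vanish, and if $la-s=0$ both sides equal $1$, so the identity persists. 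I expect this small boundary check to be the only non-formal point — everything else is bookkeeping with finitely supported power series.
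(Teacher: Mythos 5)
Your proposal is correct and is essentially the paper's own argument: induction on $s$ with $n$ kept free, using Pascal's rule to obtain the recursion $L_{s+1}^{(n)}=L_s^{(n)}+(t-1)L_s^{(n-1)}$ (the paper writes the same chain in the forward direction, summing the induction hypothesis for $n$ with $(t-1)$ times the one for $n-1$) and Lemma \ref{lem1} to combine the right-hand sides coefficientwise. Your extra boundary check that the Lemma \ref{lem1} identity persists for $m=la-s\le 0$ is a small point the paper leaves implicit, but it does not change the approach.
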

\begin{proof}
We proceed by induction on $s$. For $s=0$, this is a trivial identity. Let's assume that the proposition holds for $s=u-1$ and for all $n$, which means 
\begin{equation}\label{3}
\sum\limits_{j \geq 0}\binom{u-1}{j}(t-1)^j(\sum\limits_{l \geq 0} \binom{n-j}{la}_{\hspace{-1.3mm}a} t^l)=\sum\limits_{l \geq 0} \binom{n}{la-u+1}_{\hspace{-1.3mm}a} t^l.
\end{equation}

Now replacing $n$ with $(n-1)$ and multiplying by $(t-1)$ we have

$$\sum\limits_{j\geq0}\binom{u-1}{j}(t-1)^{j+1}(\sum\limits_{l \geq 0} \binom{n-1-j}{la}_{\hspace{-1.3mm}a} t^l)=\sum\limits_{l \geq 0} \binom{n-1}{la-u+1}_{\hspace{-1.3mm}a} t^l(t-1).$$

Replacing $j$ with $(j-1)$ and rearranging the righthand side gives 

\begin{equation}
    \label{44}
    \sum\limits_{j\geq0}\binom{u-1}{j-1}(t-1)^{j}(\sum\limits_{l \geq 0} \binom{n-j}{la}_{\hspace{-1.3mm}a} t^l)=\sum\limits_{l \geq 0} (\binom{n-1}{(l-1)a-u+1}_{\hspace{-1.3mm}a}-\binom{n-1}{la-u+1}_{\hspace{-1.3mm}a}) t^l.
\end{equation}

Summing (\ref{3}) and (\ref{44}), and using Lemma \ref{lem1} gives
$$\sum\limits_{j \geq 0}\binom{u}{j}(t-1)^j(\sum\limits_{l \geq 0} \binom{n-j}{la}_{\hspace{-1.3mm}a} t^l)=\sum\limits_{l \geq 0} \binom{n}{la-u}_{\hspace{-1.3mm}a} t^l.$$

\end{proof}

Using Proposition \ref{prop1}, the Ehrhart series of $I_{r,k}^{n}$ (\ref{2}) becomes

$$ \frac{\sum\limits_{i \geq 0} (-1)^{i}\binom{n}{i} \left(\sum\limits_{l\geq0} \binom{n}{l(k-ri)-i)}_{k-ri}t^l \right)} {(1-t)^n}.
   $$
Thus we have
\begin{equation}
    \label{5}
    h^{*}_d(I_{r,k}^{n})=\sum\limits_{i \geq 0} (-1)^i \binom{n}{i} \binom{n}{(k-ri)d-i}_{k-ri}.
\end{equation}

In Section 2.2, we will prove Conjecture \ref{conj2} which contains Conjecture \ref{conj1} as a special case when $r=1$. Since we have an explicit formula for $h^{*}_d(I_{r,k}^{n})$, our strategy is to count the number of $r$-hypersimplicial decorated ordered set partitions of type $(k,n)$ with winding number $d$ and compare the formulas.

\subsection{Enumeration of \texorpdfstring{$r$-} hhypersimplicial decorated ordered set partitions with a fixed winding number} We start with an elementary lemma, skipping the proof. 
\begin{Lemma} \label{lem2}
The $\mathbb{Z}/n\mathbb{Z}$ action on $\{1,2,\cdots,n\}$ by cyclic shift does not change the \textit{winding number} of decorated ordered set partitions.
\end{Lemma}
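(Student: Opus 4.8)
The plan is to reduce the statement to the observation that the winding \emph{vector} is merely cyclically permuted by the generator of $\mathbb{Z}/n\mathbb{Z}$, so that its coordinate sum — and hence the winding number $d=(w_1+\cdots+w_n)/k$ — is unchanged. Since the generator $\sigma\colon j\mapsto j+1 \pmod n$ generates the whole group, it suffices to treat $\sigma$.

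First I would fix a decorated ordered set partition $P=((L_1)_{l_1},\cdots,(L_m)_{l_m})$ of type $(k,n)$ and note that $\sigma$ acts on it by $\sigma\cdot P=((\sigma L_1)_{l_1},\cdots,(\sigma L_m)_{l_m})$ with $\sigma L=\{\,j+1\bmod n : j\in L\,\}$; crucially, the cyclic order of the blocks around the circle and all the decorations $l_i$ are untouched, so the clockwise distance from $\sigma L_a$ to $\sigma L_b$ in $\sigma\cdot P$ equals the clockwise distance from $L_a$ to $L_b$ in $P$ (both are $l_a+l_{a+1}+\cdots+l_{b-1}$ with indices mod $m$). Since $j\in L_i \iff j+1\in\sigma L_i$, the block of $\sigma\cdot P$ containing $i$ is the $\sigma$-image of the block of $P$ containing $i-1$. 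Writing $w'$ for the winding vector of $\sigma\cdot P$, its $i$-th entry $w'_i$ is the clockwise distance in $\sigma\cdot P$ from the block of $i$ to the block of $i+1$, which by the above equals the clockwise distance in $P$ from the block of $i-1$ to the block of $i$, i.e. $w'_i=w_{i-1}$ (indices mod $n$). Thus $w'=(w_n,w_1,\dots,w_{n-1})$, whence $w'_1+\cdots+w'_n=w_1+\cdots+w_n=kd$ and the winding number of $\sigma\cdot P$ is again $d$.

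There is no real obstacle here — the argument is pure bookkeeping, which is presumably why the authors skip it — and the only point requiring a moment's care is the direction of the index shift: relabelling by $\sigma$ sends element $i$ of $\sigma\cdot P$ "back" to element $i-1$ of $P$, so the winding vector shifts as $w'_i=w_{i-1}$ rather than $w'_i=w_{i+1}$; in either case the coordinate sum is preserved, which is all that is needed. (One may also check, consistent with the later enumeration, that $|L_i|$ and $l_i$ are unchanged, so $r$-hypersimpliciality is preserved as well, though that is not part of this lemma.)
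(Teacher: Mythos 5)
Your argument is correct: the relabeling $\sigma$ leaves the circular arrangement and the decorations $l_i$ untouched, so the winding vector is just cyclically shifted ($w'_i=w_{i-1}$), and the sum $w_1+\cdots+w_n=kd$, hence the winding number, is unchanged. The paper explicitly skips the proof of this lemma, and your write-up supplies exactly the elementary bookkeeping argument that was intended, with the index-shift direction handled correctly.
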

For example, decorated ordered set partitions $(\{1,2,7\}_2,\{3,5\}_3,\{4,6\}_1)$ and\\ 
$(\{2,3,1\}_2,\{4,6\}_3,\{5,7\}_1)$ have the same winding number. 

Next we will show that a winding vector determines a decorated ordered set partition. We observed that when the winding number is $d$, then $w_1+\cdots+w_n=kd$. And $0\leq w_i \leq k-1$ since the circumference of the circle is $k$ (if $w_i=k$, then $i$ and $(i+1)$ are in a same block which means $w_i=0$). It turns out that these are the only restrictions for winding vectors.
\begin{Prop} \label{prop2}
Decorated ordered set partitions of type $(k,n)$ with winding number $d$ are in bijection with elements of $\{(w_1,\cdots,w_n) \in \mathbb{Z}^n \mid 0 \leq w_i \leq k-1 , \hspace{2mm} w_1+\cdots+w_n=kd\}$.
\end{Prop}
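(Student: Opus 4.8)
The plan is to recognize a decorated ordered set partition as a map assigning positions on the circle, identify the winding-vector map with ``take consecutive differences modulo $k$,'' and then write down the inverse explicitly. The key reformulation: a decorated ordered set partition of type $(k,n)$ is the same data as a function $p\colon\{1,\dots,n\}\to\mathbb{Z}/k\mathbb{Z}$ recording, for each $j$, the position of the block containing $j$ on the circle of circumference $k$ --- subject to the identification of two such functions that differ by a global additive shift. Indeed, this global-shift ambiguity is exactly the ``up to cyclic rotation of blocks'' equivalence from the definition, since rotating all positions by a constant cyclically permutes the blocks together with their decorations. One recovers the blocks as the fibers of $p$, the cyclic order of blocks from the cyclic order of the values of $p$, and each decoration $l_i\ge 1$ as the clockwise gap between consecutive occupied positions; conversely a decorated ordered set partition determines $p$ once a chosen block is placed at $0$, and the condition $l_i\ge 1$ guarantees distinct blocks occupy distinct positions. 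Since the shift action of $\mathbb{Z}/k\mathbb{Z}$ on functions $\{1,\dots,n\}\to\mathbb{Z}/k\mathbb{Z}$ is free (the domain is nonempty), every class has a unique representative with $p(1)=0$, so decorated ordered set partitions of type $(k,n)$ correspond bijectively to functions $p$ with $p(1)=0$.

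Under this dictionary the winding vector is the vector of consecutive differences: $w_i$ is the representative in $\{0,1,\dots,k-1\}$ of $p(i+1)-p(i)\in\mathbb{Z}/k\mathbb{Z}$, with indices mod $n$, so that $w_n$ comes from $p(1)-p(n)$. The bounds $0\le w_i\le k-1$ are automatic, and $w_1+\dots+w_n=\sum_i\big(p(i+1)-p(i)\big)\equiv 0\pmod k$, so this sum equals $kd$ for a unique integer $d\ge 0$, which is by definition the winding number. It then remains to check that for each fixed $d$ the map $p\mapsto w$ restricts to a bijection onto $\{w\in\mathbb{Z}^n : 0\le w_i\le k-1,\ w_1+\dots+w_n=kd\}$, and I would do injectivity and surjectivity at once by exhibiting the inverse: given such a $w$, set $p(1)=0$ and $p(i+1)=\big(p(i)+w_i\big)\bmod k$ for $i=1,\dots,n-1$. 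This is visibly the unique $p$ with $p(1)=0$ whose first $n-1$ consecutive differences are $w_1,\dots,w_{n-1}$, and a one-line computation using $\sum_{i=1}^n w_i=kd$ shows the wrap-around difference $p(1)-p(n)$ also has representative $w_n$; hence $w$ is realized and the two maps are mutually inverse.

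The only genuinely delicate point is the first step: verifying that the ``cyclic rotation of blocks'' equivalence on decorated ordered set partitions corresponds precisely to the global-shift equivalence on position functions, and that this action is free so that normalizing $p(1)=0$ is a legitimate (and bijective) choice of representative. Everything after that reduces to the elementary fact that a tuple in $(\mathbb{Z}/k\mathbb{Z})^n$ with prescribed consecutive differences whose integer lifts sum to $kd$ exists and is unique once a single coordinate is pinned down --- the well-definedness of the inverse being exactly the congruence $w_1+\dots+w_n\equiv 0\pmod k$.
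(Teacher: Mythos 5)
Your proposal is correct and follows essentially the same route as the paper: the paper's proof also constructs the inverse map by placing $1$ at a chosen spot on the circle of $k$ spots and then placing $i+1$ a clockwise distance $w_i$ after $i$, which is exactly your accumulation $p(i+1)=p(i)+w_i \bmod k$ with the normalization $p(1)=0$. Your write-up just makes explicit the points the paper leaves implicit (the identification of block-rotation equivalence with global shift, freeness of the shift action, and the wrap-around check for $w_n$).
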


\begin{proof}
It is enough to construct a decorated ordered set partition of type $(k,n)$ with winding number $d$ from a winding vector satisfying the above conditions. First, draw $k$ spots on the circle in clockwise order and put 1 in one spot. Having put $i$ in some spot, move clockwise $w_i$ spots and put $i+1$ in that spot. After placing all elements, nonempty spots become blocks and the clockwise distance from $L_i$ and $L_{i+1}$ is $l_i$.
\end{proof}

\begin{Ex}
For type $(k,n)=(6,7)$, we will construct a decorated ordered set partition from the vector $(0,2,3,3,3,1,0)$. See Figure \ref{fig2}. First, draw $k=6$ spots and put 1 in one spot (upper-left figure). Then put elements according to the given vector (upper-right figure). $\{1,2,7\}$, $\{3,5\}$, and $\{4,6\}$ will be blocks. There is one empty spot between $\{1,2,7\}$ and $\{3,5\}$ so the distance is 2. The distance between $\{3,5\}$ and $\{4,6\}$ is 3 as there are two empty spots. Resulting decorated ordered set partition is $(\{1,2,7\}_2,\{3,5\}_3,\{4,6\}_1)$ (lower figure). We recovered Example \ref{ex1}.
\end{Ex}

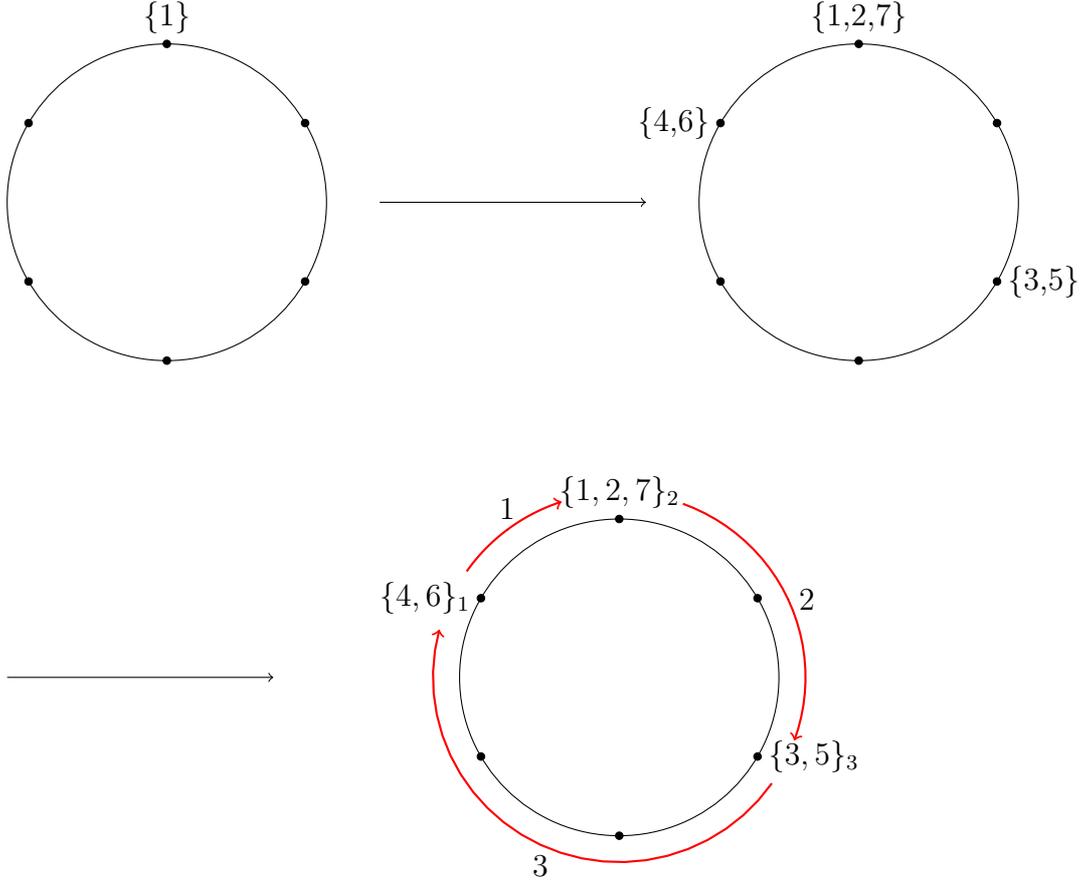
\begin{figure}[ht]\label{f2}
\begin{tikzpicture}[scale=0.7]
\filldraw[black] (0,3) circle (2pt) node[anchor=south] {\{1\}};
\filldraw[black] (2.598,1.5) circle (2pt) node[anchor=west] {};
\filldraw[black] (2.598,-1.5) circle (2pt) node[anchor=west] {};
\filldraw[black] (0,-3) circle (2pt) node[anchor=north] {};
\filldraw[black] (-2.598,-1.5) circle (2pt) node[anchor=east] {};
\filldraw[black] (-2.598,1.5) circle (2pt) node[anchor=east] {};
\draw (0,0) circle (3);
\draw[->] (4,0) -- (9,0);
\filldraw[black] (5+8,3) circle (2pt) node[anchor=south] {\{1,2,7\}};
\filldraw[black] (5+10.598,1.5) circle (2pt) node[anchor=west] {};
\filldraw[black] (5+10.598,-1.5) circle (2pt) node[anchor=west] {\{3,5\}};
\filldraw[black] (5+8,-3) circle (2pt) node[anchor=north] {};
\filldraw[black] (5+8-2.598,-1.5) circle (2pt) node[anchor=east] {};
\filldraw[black] (5+8-2.598,1.5) circle (2pt) node[anchor=east] {\{4,6\}};
\draw (5+8,0) circle (3);
\draw[->] (-3,-9) -- (5-3,-9);

\filldraw[black] (2+6.5,3-9) circle (2pt) node[anchor=south] {$\{1,2,7\}_2$};
\filldraw[black] (2+5+10.598-6.5,1.5-9) circle (2pt) node[anchor=west] {};
\filldraw[black] (2+5+10.598-6.5,-1.5-9) circle (2pt) node[anchor=west] {$\{3,5\}_3$};
\filldraw[black] (2+5+8-6.5,-3-9) circle (2pt) node[anchor=north] {};
\filldraw[black] (2+5+8-2.598-6.5,-1.5-9) circle (2pt) node[anchor=east] {};
\filldraw[black] (2+5+8-2.598-6.5,1.5-9) circle (2pt) node[anchor=east] {$\{4,6\}_1$};
\draw (8.5,-9) circle (3);

\draw [red,thick,<-,domain=-20:70] plot ({8.5+3.5*cos(\x)}, {-9+3.5*sin(\x)});
\draw [red,thick,->,domain=145:108] plot ({8.5+3.5*cos(\x)}, {-9+3.5*sin(\x)});
\draw [red,thick,<-,domain=165:325] plot ({8.5+3.5*cos(\x)}, {-9+3.5*sin(\x)});
\filldraw[black] (11.672,-7.520) circle (0.00001pt) node[anchor=west] {2};
\filldraw[black] (7.0208360839
,-12.172077255
) circle (0.00001pt) node[anchor=north] {3};
\filldraw[black] (6.393647419
,-6.2047757148
) circle (0.00001pt) node[anchor=south] {1};
\end{tikzpicture}
\caption{Constructing the decorated ordered set partition associated to the winding vector $(0,2,3,3,3,1,0)$.} \label{fig2}
\end{figure}

From Proposition \ref{prop2}, we know that the number of decorated ordered set partitions of type $(k,n)$ with winding number $d$ is $|\{(w_1,\cdots,w_n) \in \mathbb{Z}^n \mid 0 \leq w_i \leq k-1 , \hspace{2pt} w_1+\cdots+w_n=kd\}|$. A simple combinatorial argument shows this number is the same as the coefficient of $t^{kd}$ in $(1+\cdots+t^{k-1})^n$, which is $\binom{n}{kd}_k$. So the number of decorated ordered set partitions of type $(k,n)$ with winding number $d$ is $\binom{n}{kd}_{k}$.

Recall that we are interested in the number of $r$-hypersimplicial decorated ordered set partitions of type $(k,n)$ with winding number $d$. Throughout the rest of this section, when we say decorated ordered set partition, \textbf{we always assume it is of type $(k,n)$ with winding number $d$}.

\begin{Def}
For a decorated ordered set partition $P=((L_1)_{l_1},(L_2)_{l_2},...,(L_m)_{l_m})$, a block $L_i$ is \textit{r-bad} if $l_i \geq r|L_i|$. Let $I_r (P)=\{L_i \mid L_i \text{ is $r$-bad}\}$.
\end{Def}

For example, the set $I_1 ((\{1,2,7\}_2,\{3,5\}_3,\{4,6\}_1))$ is $\{\{3,5\}\}$. Recall that  $r$-hypersimplicial decorated ordered set partitions satisfy $1 \leq l_i \leq r|L_i|-1$ for all blocks. So a decorated ordered set partition is $r$-hypersimplicial if and only if $I_r (P)$ is empty.
\begin{Def}
For a set $T$, define $UP(T)$ to be a set of all (unordered) partitions of $T$. For example, the partition $\{\{1,2,4\},\{3\},\{5\}\}$ is in $UP(\{1,2,3,4,5\})$.
\end{Def}

\begin{Def}
For $T\subseteq \{1,2,\cdots,n\}$ and $S\in UP(T)$,
define \\ $K_r (S)=\{ P \text{: decorated ordered set partition such that $S \subseteq I_r (P)$} \}$.
\end{Def}
In other words, the set $K_r (S)$ consists of all decorated ordered set partitions  (of type $(k,n)$ with winding number $d$) having elements of $S$ as $r$-bad blocks. For example, when $S=\phi$, the set $K_r(\phi)$ consists of all decorated ordered set partitions (of type $(k,n)$ with winding number $d$).
\begin{Def}
For $T \subseteq \{1,2,\cdots,n\}$, let $H_r (T)=\sum\limits_{S \in UP(T)} (-1)^{|S|} |K_r (S)|.$
\end{Def}
\begin{Ex}  Table \ref{ta1} shows the lists of  $K_1(S)$ for $S \in UP(\{1,2,3\})$, among decorated ordered set partitions of type $(4,5)$ with winding number 1.
\begin{table}[h!]
\begin{tabu} to 0.8\textwidth { | X[c] | X[c] | }
   \hline
   Set & Elements \\
   \hline
   $K_1 (\{\{1,2,3\}\})$ & $(\{1,2,3\}_3,\{4,5\}_1)$   \\
   \hline
    $K_1 (\{\{1,2\},\{3\}\})$  &  $(\{1,2\}_2,\{3\}_1,\{4,5\}_1)$  \\
   \hline
    $K_1 (\{\{2,3\},\{1\}\})$  & $(\{1\}_1,\{2,3\}_2,\{4,5\}_1)$  \\
   \hline
    $K_1 (\{\{1,3\},\{2\}\})$  &  \\
   \hline
    $K_1 (\{\{1\},\{2\},\{3\}\})$  & $(\{1\}_1,\{2\}_1,\{3\}_1,\{4,5\}_1)$ \\ 
   \hline

\end{tabu}
\vspace{3mm}

\caption{ Listing $K_1(S)$ for $S \in UP(\{1,2,3\})$, among decorated ordered set partitions of type $(4,5)$ with winding number 1.}
\label{ta1}
\end{table}

Note that $K_1 (\{\{1,3\},\{2\}\})$ is an empty set, as it is impossible to have winding number 1 with 1-bad blocks $\{1,3\}$ and $\{2\}$. In this case we have, 
\begin{align*}
H_1 (\{1,2,3\})=-&|K_1 (\{\{1,2,3\}\})|+|K_1 (\{\{1,2\},\{3\}\})|  +|K_1 (\{\{2,3\},\{1\}\})| \\ +&|K_1 (\{\{1,3\},\{2\}\})|-|K_1 (\{\{1\},\{2\},\{3\}\})|  \\=-&1+1+1-1=0.
\end{align*}
\end{Ex}

Now we relate $H_r (T)$ with the number of $r$-hypersimplicial decorated ordered set partitions (of type $(k,n)$ with winding number $d$).

\begin{Prop} \label{prop3} The number of $r$-hypersimplicial decorated ordered set partitions (of type $(k,n)$ with winding number $d$)
is
$$\sum_{T \subseteq \{1,2,...,n\}} H_r (T).$$

\end{Prop}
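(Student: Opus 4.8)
The plan is to prove the identity by inclusion–exclusion on the set of $r$-bad blocks, organized block-by-block over which elements of $\{1,\dots,n\}$ lie in some $r$-bad block. I would first reformulate the right-hand side as a single sum over all decorated ordered set partitions $P$ (of type $(k,n)$, winding number $d$), weighted by an appropriate sign-counting coefficient. Concretely, fix $P$ and expand $\sum_{T\subseteq[n]} H_r(T) = \sum_{T\subseteq[n]}\sum_{S\in UP(T)}(-1)^{|S|}|K_r(S)|$. Swapping the order of summation, $P$ contributes to the term indexed by $(T,S)$ precisely when $S\subseteq I_r(P)$, i.e. when $S$ is a partition of some subset $T$ of $[n]$ all of whose parts are $r$-bad blocks of $P$. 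But the parts of $S$ are themselves blocks of $P$, so $S$ must be a sub-multiset of $I_r(P)$ — equivalently $S$ is just a subset of the block-set $I_r(P)$, and $T = \bigcup_{L\in S} L$ is then determined by $S$. Hence the total coefficient of $P$ is $\sum_{S\subseteq I_r(P)}(-1)^{|S|}$.

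Once the contribution of each $P$ is reduced to $\sum_{S\subseteq I_r(P)}(-1)^{|S|}$, the proof is immediate: this alternating sum over all subsets of a finite set $I_r(P)$ equals $0$ if $I_r(P)\neq\emptyset$ and equals $1$ if $I_r(P)=\emptyset$. Since $I_r(P)=\emptyset$ is exactly the condition that $P$ is $r$-hypersimplicial, the full double sum collapses to the number of $r$-hypersimplicial decorated ordered set partitions of type $(k,n)$ with winding number $d$, which is what we want.

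The step I expect to require the most care is the bookkeeping in the interchange of summations — specifically checking that $T$ is genuinely redundant data given $S$, and that distinct pairs $(T,S)$ with $S\subseteq I_r(P)$ and $T=\bigcup S$ are not double-counting the same contribution of $P$. The subtlety is that $UP(T)$ ranges over partitions of $T$, so a priori a block-set $S$ could be a partition of more than one $T$; but a partition of a set determines that set as its union, so for each $S$ there is a unique admissible $T$, and summing over all $T\subseteq[n]$ then over $S\in UP(T)$ is the same as summing over all sets $S$ of pairwise-disjoint nonempty subsets of $[n]$. Intersecting that range with the constraint ``every part of $S$ is an $r$-bad block of $P$'' gives exactly the subsets of $I_r(P)$ (the parts of $I_r(P)$ are automatically pairwise disjoint, being distinct blocks of $P$), completing the identification. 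I would also remark that the finiteness of $I_r(P)$ — indeed $|I_r(P)|\le m\le n$ — is what makes the alternating sum legitimate.

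For clarity I would present it as: (i) state the reindexing $\sum_{T}\sum_{S\in UP(T)} = \sum_{S}$ over pairwise-disjoint nonempty families; (ii) Fubini to get $\sum_{T\subseteq[n]}H_r(T) = \sum_{P}\bigl(\sum_{S\subseteq I_r(P)}(-1)^{|S|}\bigr)$; (iii) evaluate the inner sum via the binomial theorem $\sum_{j=0}^{|I_r(P)|}\binom{|I_r(P)|}{j}(-1)^j = [\,I_r(P)=\emptyset\,]$; (iv) conclude. No heavy computation is involved; the content is entirely in the combinatorial reindexing of step (i).
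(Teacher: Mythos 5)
Your proof is correct and follows essentially the same route as the paper: interchange the double sum, observe that $P\in K_r(S)$ exactly when $S\subseteq I_r(P)$, and collapse each $P$'s contribution via the alternating sum $\sum_{i}(-1)^i\binom{|I_r(P)|}{i}$, which vanishes unless $I_r(P)=\emptyset$. Your extra bookkeeping that $T=\bigcup S$ is determined by $S$ (so no pair $(T,S)$ is double-counted) is a point the paper leaves implicit, but it is the same argument.
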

\begin{proof}
It is enough to compute $\sum\limits_{T \subseteq \{1,2,\cdots,n\}}(\sum\limits_{S \in UP(T)} (-1)^{|S|} |K_r (S)|)$, by the definition of $H_r(T)$. A decorated ordered set partition $P$ belongs to $K_r (S)$ if and only if $S$ is a subset of  $I_r (P)$. So if $I_r (P)$ is empty then $P$ will be counted once when $S=\phi$. If $I_r (P)$ is non empty, say $|I_r (P)|=m$, then $P$ will be counted $\binom{m}{i}$ times with the sign $(-1)^i$ as $S$ ranges over all $i$-element subsets of $I_r (P)$. Thus the contribution of $P$ to ($\sum\limits_{T \subseteq \{1,2,...,n\}} H_r (T)$) is $\sum\limits_{i=0}^{m}(-1)^i \binom{m}{i}=0$. So the above sum counts $P$ such that $I_r (P)$ is empty, which means $r$-hypersimplicial.
\end{proof}

 Now it remains to give a formula for  $H_r(T)$. When $S \in UP(\{1,2,\cdots,n\})$, elements of $K_r (S)$ are decorated ordered set partitions $P=((L_1)_{l_1},\cdots,(L_m)_{l_m})$ whose blocks are all $r$-bad, which means $l_i \geq r |L_i|$ for all $i$. Summing inequalities for all $i$ gives $\sum l_i \geq r \sum |L_i|$ that implies $k \geq rn$ which is impossible as $k<n$. Thus $K_r (S)$ is an empty set, so $H_r (\{1,2,\cdots,n\})=0$. So we will only consider when $T$ is a proper subset of $\{1,2,\cdots,n\}$. By Lemma \ref{lem2}, we may assume that $n \notin T$ since $H_r (T)$ is invariant under cyclic shifts of $\{1,2,\cdots,n\}$.

\begin{Def}
For a fixed $T \subsetneq \{1,2,\cdots,n\}$ such that $n \notin T$, a \textit{T-singlet block} is a block with only one element $t$ and $t\in T$. A sequence of consecutive blocks $(L_i,\cdots,L_{i+j})$ consisting of $T$-singlet blocks in a decorated ordered set partition $P$ (indices are considered modulo number of blocks in $P$) is \textit{r-packed} if $l_i=\cdots=l_{i+j-1}=r$ and $l_{i+j} \geq r$. An $r$-packed sequence is \textit{increasing r-packed} if elements in each block $(L_i,\cdots,L_{i+j})$ are in increasing order. Such a sequence is $maximal$ if it is not a subsequence of another increasing r-packed sequence.  
\end{Def}

The increasing r-packed condition highly depends on $T$ since it only applies to consecutive $T$-singlet blocks. Note that $T$-singlet blocks in $r$-packed sequence are all $r$-bad. It is the most concentrated arrangement that makes these blocks all $r$-bad. We allow increasing $r$-packed sequence of length 1 by convention.

\begin{Ex}
For $T=\{1,2,4,6\}$ and $r=2$, Figure \ref{fig3} is the picture for the decorated ordered set partition $(\{1\}_2,\{2\}_2,\{4\}_2,\{5,8,9,10\}_1,\{6\}_2,\{7\}_2,\{11,12,13\}_1)$. Maximal increasing $r$-packed sequences here are $(\{1\},\{2\},\{4\})$ and $(\{6\})$. Note that the sequence $(\{6\},\{7\})$ is not $r$-packed since $\{7\}$ is not a $T$-singlet block. 
\end{Ex}
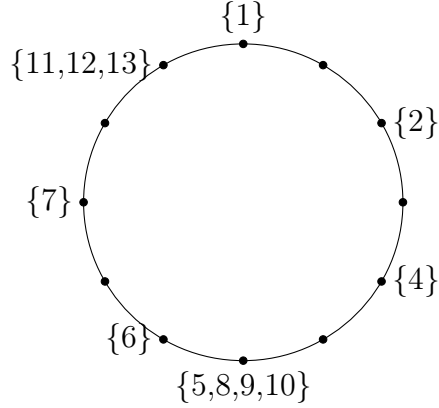
\begin{figure}[ht]\label{f5}
\begin{tikzpicture}[scale=0.7]
\filldraw[black] (0,3) circle (2pt) node[anchor=south] {\{1\}};
\filldraw[black] (1.5,2.5980762114) circle (2pt) node[anchor=south] {};
\filldraw[black] (2.5980762114,1.5) circle (2pt) node[anchor=west] {\{2\}};
\filldraw[black] (3,0) circle (2pt) node[anchor=west]{};
\filldraw[black] (2.5980762114,-1.5) circle (2pt) node[anchor=west] {\{4\}};
\filldraw[black] (1.5,-2.5980762114) circle (2pt) node[anchor=north] {};
\filldraw[black] (0,-3) circle (2pt) node[anchor=north] {\{5,8,9,10\}};
\filldraw[black] (-1.5,-2.5980762114) circle (2pt) node[anchor=east] {\{6\}};
\filldraw[black] (-2.5980762114,-1.5) circle (2pt) node[anchor=west] {};
\filldraw[black] (-3,0) circle (2pt) node[anchor=east] {\{7\}};
\filldraw[black] (-2.5980762114,1.5) circle (2pt) node[anchor=east] {};
\filldraw[black] (-1.5,2.5980762114) circle (2pt) node[anchor=east] {\{11,12,13\}};
\draw (0,0) circle (3);

\end{tikzpicture}
\caption{Reading off $r$-packed sequences for $r=2$.} \label{fig3}
\end{figure}

\begin{Lemma} \label{lem3}
Let $S=\{M_1,M_2,\cdots,M_j\} \in UP(T)$, where $T=\{t_1<t_2<\cdots<t_m\}$ and $n \notin T$. Enumerate the elements of $M_i$ in increasing order, so $M_i=\{t_{i_1}<t_{i_2}<\cdots<t_{i_w}\}$. Then elements of $K_r (S)$ are in bijection with elements of $K_r (\{\{t_1\},\{t_2\},\cdots,\{t_m\}\})$ having increasing $r$-packed sequence $(\{t_{i_1}\},\{t_2\},\cdots,\{t_{i_w}\})$ for all $i$. 
\end{Lemma}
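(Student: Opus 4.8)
The plan is to exhibit the bijection by an explicit \emph{splitting} map $\Phi\colon K_r(S)\to K_r(\{\{t_1\},\dots,\{t_m\}\})$ together with an inverse \emph{merging} map $\Psi$. To define $\Phi$, take $P\in K_r(S)$ and picture it on a circle of circumference $k$. Each $M_i=\{t_{i_1}<\dots<t_{i_w}\}$ (write $w=|M_i|$) occurs as a block, say at the point $\mu_i$, and, being $r$-bad, is followed clockwise by an empty arc $G_i$ of length $l_i\ge rw$ before the next block; the arcs $G_i$ are pairwise disjoint and contain no block point of $P$. Let $\Phi(P)$ be obtained by replacing, for every $i$, the block $M_i$ with the singleton blocks $\{t_{i_1}\},\dots,\{t_{i_w}\}$ placed at the points $\mu_i,\mu_i+r,\dots,\mu_i+(w-1)r$ (all lying in $\{\mu_i\}\cup G_i$), with decorations $r,\dots,r$ on the first $w-1$ of them and $l_i-(w-1)r$ on the last; everything outside the arcs $G_i$ is left unchanged. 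The map $\Psi$ does the reverse: for each $i$ it collapses the increasing $r$-packed sequence $(\{t_{i_1}\},\dots,\{t_{i_w}\})$ to the single block $M_i$ placed at the position of $\{t_{i_1}\}$, carrying as decoration the sum of the decorations along that sequence.

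The bookkeeping is routine. For $\Phi$: the new decorations are $\ge 1$ (the non-final ones equal $r\ge1$, the final one equals $l_i-(w-1)r\ge r\ge1$) and their total over the split of $M_i$ is still $l_i$, so $\Phi(P)$ is a decorated ordered set partition of type $(k,n)$; each new singleton $\{t_{i_c}\}$ has decoration $\ge r=r|\{t_{i_c}\}|$, hence is $r$-bad, so $\{t_1\},\dots,\{t_m\}\in I_r(\Phi(P))$, and the sequence $(\{t_{i_1}\},\dots,\{t_{i_w}\})$ is increasing $r$-packed by construction. For $\Psi$: the merged decoration is $(w-1)r+(\text{final decoration})\ge rw=r|M_i|$, so each $M_i$ is an $r$-bad block of $\Psi(P')$ and $S\subseteq I_r(\Psi(P'))$. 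Finally $\Psi\circ\Phi=\mathrm{id}$ and $\Phi\circ\Psi=\mathrm{id}$, since both compositions restore the original block positions and decorations. The one point still to verify is that $\Phi$ (equivalently $\Psi=\Phi^{-1}$) preserves the winding number — this is needed because, by the blanket convention of this section, both $K_r(S)$ and $K_r(\{\{t_1\},\dots,\{t_m\}\})$ consist of decorated ordered set partitions with winding number $d$.

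I would prove the winding-number invariance with a crossing-number argument. Pick a point $\star$ on the circle that is not a block point; then the winding number of a decorated ordered set partition equals the number of times the closed walk $b(1)\to b(2)\to\cdots\to b(n)\to b(1)$ crosses $\star$, where $b(a)$ is the point of the block containing $a$ and the step from $b(a)$ to $b(a+1)$ is the clockwise arc of length $w_a\in[0,k)$; indeed this count is $\tfrac{1}{k}\sum_a w_a$. Fix some $i$ and choose $\star$ in the arc of length $l_i-(w-1)r\ge r\ge1$ that in $\Phi(P)$ immediately precedes the block following the split of $M_i$: this arc is empty and disjoint from $[\mu_i,\mu_i+(w-1)r]$. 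Passing from $P$ to $\Phi(P)$ displaces each point $b(a)$ only clockwise, by less than $l_j$, and always within the interval $[\mu_j,\mu_j+(|M_j|-1)r]$ if $a\in M_j$ (and not at all if $a$ lies in a non-$M$ block) — a region that in every case avoids $\star$. A short case analysis over the consecutive pairs $(a,a+1)$ then shows that each individual step of the walk crosses $\star$ in $\Phi(P)$ if and only if it does in $P$: when both $a,a+1$ lie in non-$M$ blocks the step is literally unchanged; when $a,a+1$ lie in the same $M_j$ the step of $P$ is a length-$0$ step and the step of $\Phi(P)$ is a length-$r$ arc contained in $(\mu_j,\mu_j+(|M_j|-1)r)$, hence missing $\star$; and in the remaining cases the endpoints move within regions disjoint from $\star$ and from each other, so removing or appending the short traversed sub-arcs (none of which contains $\star$) to the step-arc does not change $\star$-membership. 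Hence the number of crossings, and with it the winding number, is unchanged, and $\Phi,\Psi$ are mutually inverse bijections, as claimed.

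I expect the winding-number invariance to be the only genuine obstacle; everything else is definition-chasing. Within the case analysis the delicate case is the pair $(a,a+1)=(t_{j_c},t_{j_{c+1}})$ inside a single $M_j$, where a nontrivial arc is created in $\Phi(P)$ where $P$ had none; here one must use $c\le|M_j|-1$ to keep that arc strictly on the near side of $\star$. (If preferred, one can first reduce to splitting a single block $M_i$, realizing $\Phi$ as a composition of such moves, which trims the ``region avoids $\star$'' bookkeeping.)
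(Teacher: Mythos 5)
Your proposal is correct and is essentially the paper's own argument: the paper proves the lemma by exactly the same splitting construction, replacing each $r$-bad block $(M_i)_l$ by the singletons $\{t_{i_1}\}_r,\dots,\{t_{i_{w-1}}\}_r,\{t_{i_w}\}_{l-r(w-1)}$ and inverting by merging, and it merely asserts (with an example, and the remark that $n\notin T$ is essential) that this does not change the winding number. Your crossing-point argument for that invariance is a legitimate filling-in of the step the paper leaves informal, and it correctly hinges on $n\notin T$ through the fact that a pair $a,a+1$ in the same part $M_j$ must be consecutive in the increasing order of $M_j$.
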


\begin{proof}
Given a decorated ordered set partition $P \in K_r (S)$, we pick a block $(M_i)_l$ which is $r$-bad. So $l \geq r|M_i|=rw$. Change $(M_i)_l$ to $\{t_{i_1}\}_r$,$\{t_{i_2}\}_r$,...,$\{t_{i_w}\}_{l-r(w-1)}$. Since $l-r(w-1) \geq r$, the sequence $(\{t_{i_1}\},\{t_{i_2}\},\cdots,\{t_{i_w}\})$ will be increasing $r$-packed. This process does not change the winding number and new $T$-singlet blocks are all $r$-bad (see Example \ref{ex14}). Repeating this process for all $i$ we get the desired correspondence.
\end{proof}
\begin{Ex}
\label{ex14}
See Figure \ref{fig4}. The figure on the left is a decorated ordered partition $(\{1,2,4\}_6,\{5,8,9,10,13\}_1,\{6,7\}_4,\{11,12\}_1)$. When $T=\{1,2,4,6,7\}$ and $r=2$, the figure on the left has $r$-bad blocks $\{1,2,4\}$ and $\{6,7\}$, so belongs to $K_r (\{\{1,2,4\},\{6,7,\}\})$. Under the correspondence stated in Lemma \ref{lem3}, this goes to $(\{1\}_2,\{2\}_2,\{4\}_2,\{5,8,9,10,13\}_1,\{6,\}_2,\{7\}_2\{11,12\}_1)$, a decorated ordered set partition for the figure on the right. The winding number does not change. 
\end{Ex}

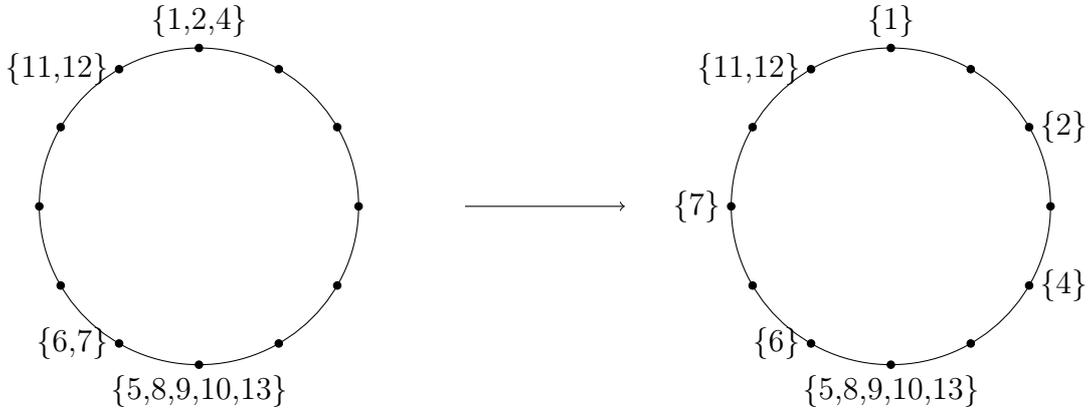
\begin{figure}[ht]
\begin{tikzpicture}[scale=0.7]
\filldraw[black] (0,3) circle (2pt) node[anchor=south] {\{1,2,4\}};
\filldraw[black] (1.5,2.5980762114) circle (2pt) node[anchor=south] {};
\filldraw[black] (2.5980762114,1.5) circle (2pt) node[anchor=south] {};
\filldraw[black] (3,0) circle (2pt) node[anchor=south] {};
\filldraw[black] (2.5980762114,-1.5) circle (2pt) node[anchor=south] {};
\filldraw[black] (1.5,-2.5980762114) circle (2pt) node[anchor=south] {};
\filldraw[black] (0,-3) circle (2pt) node[anchor=north] {\{5,8,9,10,13\}};
\filldraw[black] (-1.5,-2.5980762114) circle (2pt) node[anchor=east]{\{6,7\}} ;
\filldraw[black] (-2.5980762114,-1.5) circle (2pt) node[anchor=south] {};
\filldraw[black] (-3,0) circle (2pt) node[anchor=south] {};
\filldraw[black] (-2.5980762114,1.5) circle (2pt) node[anchor=south] {};
\filldraw[black] (-1.5,2.5980762114) circle (2pt) node[anchor=east] {\{11,12\}};

\draw (0,0) circle (3);
\draw[->] (5,0) -- (8,0);

\filldraw[black] (5+8+0,3) circle (2pt) node[anchor=south] {\{1\}};
\filldraw[black] (13+1.5,2.5980762114) circle (2pt) node[anchor=south] {};
\filldraw[black] (13+2.5980762114,1.5) circle (2pt) node[anchor=west] {\{2\}};
\filldraw[black] (13+3,0) circle (2pt) node[anchor=south] {};
\filldraw[black] (13+2.5980762114,-1.5) circle (2pt) node[anchor=west] {\{4\}};
\filldraw[black] (13+1.5,-2.5980762114) circle (2pt) node[anchor=south] {};
\filldraw[black] (13+0,-3) circle (2pt) node[anchor=north] {\{5,8,9,10,13\}};
\filldraw[black] (13-1.5,-2.5980762114) circle (2pt) node[anchor=east] {\{6\}};
\filldraw[black] (13-2.5980762114,-1.5) circle (2pt) node[anchor=south] {};
\filldraw[black] (13-3,0) circle (2pt) node[anchor=east] {\{7\}};
\filldraw[black] (13-2.5980762114,1.5) circle (2pt) node[anchor=south] {};
\filldraw[black] (13-1.5,2.5980762114) circle (2pt) node[anchor=east] {\{11,12\}};

\draw (5+8,0) circle (3);

\end{tikzpicture}
\caption{Correspondence in Lemma \ref{lem3} for $T=\{1,2,4,6,7\}$ and $r=2$.} \label{fig4}
\end{figure}

\begin{Rem}
 The condition $n\notin T$ is essential for Lemma \ref{lem3}. Without this condition, the correspondence might change the winding number as shown in Figure \ref{fig5}. The winding number on the left figure is 1 but the winding number on the right is 2. We spread elements in blocks in increasing order but since there is a cyclic symmetry, "increasing" might not be meaningful if $n \in T$.   
\end{Rem}
 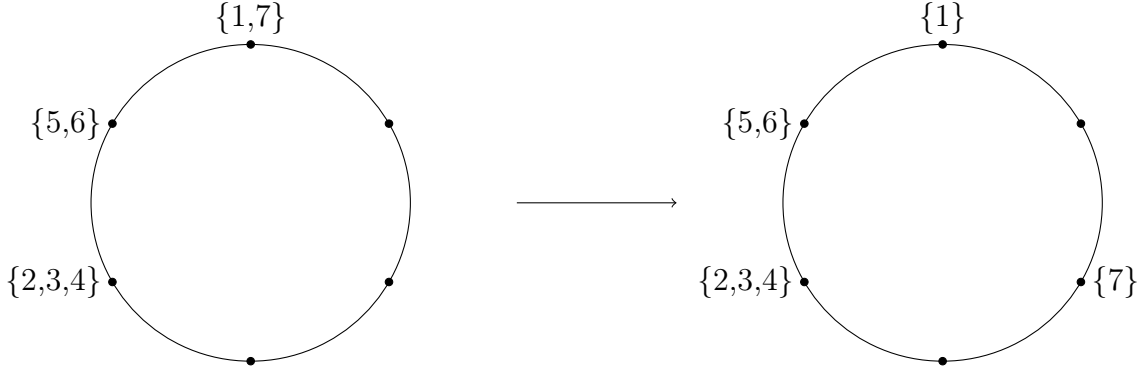
\begin{figure}[ht]
\begin{tikzpicture}[scale=0.7]
\filldraw[black] (0,3) circle (2pt) node[anchor=south] {\{1,7\}};

\filldraw[black] (2.5980762114,1.5) circle (2pt) node[anchor=south] {};
;
\filldraw[black] (2.5980762114,-1.5) circle (2pt) node[anchor=south] {};

\filldraw[black] (0,-3) circle (2pt) node[anchor=north] {};

\filldraw[black] (-2.5980762114,-1.5) circle (2pt) node[anchor=east] {\{2,3,4\}};

\filldraw[black] (-2.5980762114,1.5) circle (2pt) node[anchor=east] {\{5,6\}};

\draw (0,0) circle (3);
\draw[->] (5,0) -- (8,0);

\filldraw[black] (5+8+0,3) circle (2pt) node[anchor=south] {\{1\}};

\filldraw[black] (13+2.5980762114,1.5) circle (2pt) node[anchor=west] {};

\filldraw[black] (13+2.5980762114,-1.5) circle (2pt) node[anchor=west] {\{7\}};

\filldraw[black] (13+0,-3) circle (2pt) node[anchor=north] {};

\filldraw[black] (13-2.5980762114,-1.5) circle (2pt) node[anchor=east] {\{2,3,4\}};

\filldraw[black] (13-2.5980762114,1.5) circle (2pt) node[anchor=east] {\{5,6\}};

\draw (5+8,0) circle (3);

\end{tikzpicture}
\caption{Correspondence in Lemma \ref{lem3} for $T=\{1,7\}$ and $r=2$.} \label{fig5}
\end{figure}
 
 Now fix $T=\{t_1<t_2<\cdots<t_m\} \subsetneq \{1,2,\cdots,n\}$ such that $n \notin T$. For $S \in UP(T)$, the correspondence in Lemma \ref{lem3} gives an embedding $$i_S: K_r(S) \xhookrightarrow{} K_r(\{\{t_1\},\{t_2\},\cdots,\{t_m\}\}).$$
 Let $\chi_S :K_r(\{\{t_1\},\{t_2\},\cdots,\{t_m\}\}) \rightarrow \{0,1\}$ to be the characteristic function of  $i_{S}(K_r(S))$ which means $\chi_S(P)=0$ if $P \notin i_{S}(K_r(S)) $ and $\chi_S(P)=1$ if $P \in i_{S}(K_r(S)) $. Then we have
\begin{align}\label{4}
 H_r(T)=&\sum_{S \in UP(T)} (-1)^{|S|}|K_r(S)|=\sum_{S \in UP(T)} (-1)^{|S|}|i_{S}(K_r(S))| \\ =&\sum_{S\in UP(T)}(-1)^{|S|}(\sum_{P\in K_r(\{\{t_1\},\{t_2\},\cdots,\{t_m\}\})} \chi_S(P)) \nonumber \\ =&\sum_{P\in K_r(\{\{t_1\},\{t_2\},\cdots,\{t_m\}\})}(\sum_{S\in UP(T)}(-1)^{|S|} \chi_S(P)). \nonumber
\end{align}

\begin{Prop}\label{prop4} For a fixed $T=\{t_1<t_2<\cdots<t_m\} \subsetneq \{1,2,\cdots,n\}$ such that $n \notin T$, if a decorated ordered set partition $P \in K_r(\{\{t_1\},\{t_2\},\cdots,\{t_m\}\})$ does not have an increasing $r$-packed sequence of length greater than 1, then $\sum\limits_{S\in UP(T)} (-1)^{|S|} \chi_S (P)$ equals $(-1)^{|T|}$. Otherwise it is zero.  
\end{Prop}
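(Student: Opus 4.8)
The plan is to use Lemma \ref{lem3} to translate the condition $\chi_S(P)=1$ into a combinatorial statement about how the $T$-singlet blocks of $P$ may be grouped, to identify the relevant grouping structure with the \emph{maximal} increasing $r$-packed sequences of $P$, and finally to evaluate $\sum_{S}(-1)^{|S|}\chi_S(P)$ as a product of independent local sums, one per maximal sequence.

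First I would unwind the definitions. Since $P\in K_r(\{\{t_1\},\cdots,\{t_m\}\})$, each singleton $\{t_a\}$ is an $r$-bad block of $P$, so its decoration is at least $r$; in particular $(\{t_a\})$ is, by the stated convention, an increasing $r$-packed sequence of length $1$. By Lemma \ref{lem3}, $\chi_S(P)=1$ holds exactly when, for every block $M_i=\{t_{i_1}<\cdots<t_{i_w}\}$ of $S$, the blocks $\{t_{i_1}\},\dots,\{t_{i_w}\}$ occur consecutively in $P$ in this order with decorations $r,\dots,r,\ge r$; that is, exactly when each $M_i$, listed increasingly, is an increasing $r$-packed sequence of $P$.

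The key structural step is the claim that the maximal increasing $r$-packed sequences of $P$ form a set partition $\mathcal Q=\{Q_1,\dots,Q_g\}$ of $\{\{t_1\},\dots,\{t_m\}\}$. Existence of a maximal sequence through each $\{t_a\}$ is immediate from the length-$1$ remark above. For disjointness I would argue by contradiction: if $\{t_a\}$ lies in two distinct maximal increasing $r$-packed sequences $Q_1,Q_2$, then $Q_1\cup Q_2$ is again a run of consecutive blocks of $P$ — here the hypothesis $n\notin T$ is used, since the block containing $n$ is never a $T$-singlet block, so no maximal run of $T$-singlet blocks wraps around the circle and $Q_1,Q_2$ are genuinely overlapping sub-intervals of one such run — and one checks that $Q_1\cup Q_2$ is itself increasing $r$-packed: every block of it other than possibly the last is internal to $Q_1$ or to $Q_2$, hence has decoration exactly $r$, and any two consecutive blocks of it lie in a common $Q_i$, hence their entries increase. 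This contradicts maximality of $Q_1$. Granting the partition claim, I would then observe that a block $M$ of $S$ is an increasing $r$-packed sequence of $P$ if and only if $M$ is a contiguous sub-block of some $Q_l$: such an $M$ is contained in a maximal increasing $r$-packed sequence, which by uniqueness is the common $Q_l$ through all its elements, and being consecutive in $P$ forces $M$ to be contiguous inside $Q_l$; conversely, any contiguous sub-block of a $Q_l$ has all internal decorations equal to $r$, a final decoration $\ge r$, and increasing entries. Consequently the partitions $S\in UP(T)$ with $\chi_S(P)=1$ are precisely those obtained by independently cutting each sequence $Q_l$ into consecutive runs.

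Finally I would compute. Since $|S|$ is the total number of runs, grouping the sum over $S$ according to these independent cuttings gives
\[
\sum_{S\in UP(T)}(-1)^{|S|}\chi_S(P)=\prod_{l=1}^{g} f(|Q_l|),\qquad f(p):=\sum_{k=1}^{p}(-1)^k\binom{p-1}{k-1},
\]
where $f(p)$ is the signed count of compositions of $p$. A one-line binomial computation gives $f(p)=-(1-1)^{p-1}$, hence $f(1)=-1$ and $f(p)=0$ for $p\ge 2$. Therefore the product vanishes as soon as some $Q_l$ has length $>1$, i.e.\ as soon as $P$ has an increasing $r$-packed sequence of length greater than $1$; and otherwise every $Q_l$ is a singleton, so $g=m$ and the product equals $(-1)^m=(-1)^{|T|}$. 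This is exactly the assertion of the proposition. I expect the main obstacle to be the disjointness half of the partition claim — getting the interplay of ``consecutive in $P$'', ``decoration $=r$ versus $\ge r$'', ``increasing'', and the cyclic order precisely right, which is where $n\notin T$ is genuinely needed (cf.\ the remark following Lemma \ref{lem3}).
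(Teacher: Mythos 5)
Your proposal is correct and follows essentially the same route as the paper: both identify the partition of the $T$-singlet blocks into maximal increasing $r$-packed sequences, characterize $\chi_S(P)=1$ as cutting each such sequence into consecutive runs, and evaluate the resulting product of signed binomial sums $\sum_{k}(-1)^k\binom{p-1}{k-1}=-(1-1)^{p-1}$. Your write-up is merely somewhat more explicit than the paper's (proving the disjointness of maximal sequences and treating singleton and larger parts by one unified formula), but the underlying argument is the same.
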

\begin{proof} For $P \in K_r(\{\{t_1\},\{t_2\},\cdots,\{t_m\}\})$, define $\hat{S}(P)$ to be an unordered partition of $T$ by putting $t_i$ and $t_j$ in same part if they belong to same increasing $r$-packed sequence (this will partition $T$ by maximal increasing $r$-packed sequences of $P$). An unordered partition $S$ is a finer partition than $\hat{S}(P)$ if and only if $\chi_S(P)=1$. When $P$ has no increasing $r$-packed sequence of length greater than 1, we have $\hat{S}(P)=\{ \{t_1\},\{t_2\},\cdots,\{t_m\} \}$, the finest unordered partition of $T$. So $\chi_S(P)=1$ only when $S=\hat{S}(P)$ thus $\sum\limits_{S\in UP(T)} (-1)^{|S|} \chi_S (P)=(-1)^{|T|}$. Now assume there is $M=\{m_1<\cdots<m_a\} \in \hat{S}(P)$ such that $|M|=a \geq 2$. To split $M$ into $b$ parts so that resulting finer partition $S$ still satisfies $\chi_S(P)=1$, we choose $(b-1)$ elements $i_1<\cdots<i_{b-1}$ in a set $\{1,\cdots,a-1\}$ and split $M$ into $\{m_1,\cdots,m_{i_1}\},\{m_{i_1},\cdots,m_{i_2}\},\cdots,\{m_{i_{b-1}},\cdots,m_{a}\}$. There are $\binom{a-1}{b-1}$ ways to do that and this process can be done independently on each $M \in \hat{S}(P)$ such that $|M| \geq 2$. So we have
$$\sum\limits_{S\in UP(T)} (-1)^{|S|} \chi_S (P)=\prod_{M \in \hat{S}(P), |M|\geq2}(\sum_{b=1}^{|M|}(-1)^{b} \binom{|M|-1}{b-1})\prod_{M \in \hat{S}(P),|M|=1}(-1).$$
Since $\sum\limits_{b=1}^{|M|}(-1)^{b} \binom{|M|-1}{b-1}=0$, we have $\sum\limits_{S\in UP(T)} (-1)^{|S|} \chi_S (P)=0$ whenever $P$ has an increasing $r$-packed sequence of length greater than 1, that is, the set $\hat{S}(P)$ has a part with more than one element. 

\end{proof}

\begin{Ex}
For $T=\{1,2,3,4\}$, assume $P \in K_r (\{\{1\},\{2\},\{3\},\{4\}\})$ has (maximal) increasing $r$-packed sequence $(\{1\},\{2\},\{3\},\{4\})$. We will list $S\in UP(T)$ such that $\chi_S(P)=1$ by number of elements.

$|S|=1$ $\rightarrow$ $\{\{1,2,3,4\}\}$ 

$|S|=2$ $\rightarrow$ $\{\{1\},\{2,3,4\}\}$,$\{\{1,2\},\{3,4\}\}$,$\{\{1,2,3\},\{4\}\}$

$|S|=3$ $\rightarrow$ $\{\{1\},\{2\},\{3,4\}\}$,$\{\{1\},\{2,3\},\{4\}\}$,$\{\{1,2\},\{3\},\{4\}\}$

$|S|=4$ $\rightarrow$ $\{\{1\},\{2\},\{3\},\{4\}\}$ \\
So we have $\sum\limits_{S\in UP(T)} -(1)^{|S|} \chi_S(P)=-1+3-3+1=-\binom{3}{0}+\binom{3}{1}-\binom{3}{2}+\binom{3}{3}=0$.

\end{Ex}
\begin{Def} For a fixed $T=\{t_1<t_2<\cdots<t_m\} \subsetneq \{1,2,\cdots,n\}$ such that $n \notin T$, define $\hat{K_r}(T)$ to be the subset of $K_r (\{\{t_1\},\{t_2\},\cdots,\{t_m\}\})$ consisting of decorated ordered set partitions without increasing $r$-packed sequence of length greater than 1.
\end{Def}
By  Proposition \ref{prop4} and (\ref{4}), we have \begin{equation}H_r(T)=(-1)^{|T|}|\hat{K_r}(T)|.\end{equation} We will count the number of elements in $\hat{K_r}(T)$ by defining the second winding vector for each element. The second winding vector is a modified version of the winding vector that we previously defined.  
 
 Assume we are given $P \in \hat{K}_r(T)$. There are $k$ spots total on the circle including empty spots that are recording distances and $T$-singlet blocks $\{t_1\},\{t_2\},\cdots,\{t_m\}$ are $r$-bad blocks so for each $\{t_i\}$, there will be at least $(r-1)$ empty spots after $\{t_i\}$ as the distance to the next block is at least $r$. Color these $r$ spots, that is, the spot occupied by $\{t_i\}$ with $(r-1)$ empty spots after that \textbf{red}. Doing this for all $i$, total $r|T|=rm$ spots will be colored red. And color the remaining $(k-rm)$ spots \textbf{blue}. 
 
 \begin{Def} For $P \in \hat{K}_r(T)$, \textit{second winding vector} $v=(v_1,v_2,\cdots,v_n)$ is defined by setting $v_i$ to be the number of \textbf{blue spots} passed while moving from $i$ to $(i+1)$ in clockwise fashion. Do not include the starting point but include the arriving point (if it's blue) and when the starting point and the arriving point are in same block (spot), set $v_i=0$.
  \end{Def}
 Since the winding number is $d$, the whole path winds around the circle $d$ times. So we have $v_1+\cdots+v_n=(k-rm)d$. 
 
 If $i \notin T$, we are starting from the blue spot so $v_i$ can range from 0 to $(k-rm-1)$. However when $i\in T$, we claim $v_i$ cannot be zero. If $v_i=0$, then the path from $i$ to $i+1$ should not include any blue spots. So the path will be of the form $\{i\},\phi,\cdots,\phi,\{a_1\},\phi,\cdots,\phi,\cdots,\{a_q\},\phi,\cdots,\phi,\{i+1\}$ where $\phi$ means an empty spot. Thus the sequence $(\{i\},\{a_1\},\cdots,\{a_q\},\{i+1\})$ is \textit{r-packed}. Since $P$ does not have increasing $r$-packed sequence of length greater than 1, the sequence $(i,a_1,\cdots,a_2,i+1)$ should be a decreasing sequence which is impossible.
  It is possible to have $v_i=k-rm$ as the path can encounter every blue spot (see Example \ref{ex8}). We conclude $1 \leq v_i \leq k-rm$.

\begin{Ex} \label{ex8}
Figure \ref{fig6} explains the way to read off second winding vector. Let $P=(\{2\}_2,\{1\}_2,\{5,6\}_1,\{7,8\}_1,\{9\}_3,\{11,12,13\}_1,\{10,14\}_1,\{3,4\}_1)$, and fix $T=\{1,2,9\}$ and $r=2$. The upper left figure is a picture for $P$. Note that the sequence $(\{2\},\{1\})$ is $r$-packed but not increasing $r$-packed. So $P$ has no increasing $r$-packed sequence of length greater than 1. After coloring spots with the rule above we get the upper right figure. There will be $r|T|$ (=6) red spots and $(k-r|T|)$ (=6) blue spots. To get $v_1$, wind from 1 to 2 clockwise as shown in the lower figure, and count the number of blue spots passed. Here $v_1=6$. Continuing this process we have the second winding vector $v=(6,6,0,1,0,1,0,0,3,5,0,0,1,1)$.
\end{Ex}

\begin{figure}[ht]
\begin{tikzpicture}[scale=0.65]
\filldraw[black] (0,3) circle (2pt) node[anchor=south] {\{2\}};
\filldraw[black] (1.5,2.5980762114) circle (2pt) node[anchor=south] {};
\filldraw[black] (2.5980762114,1.5) circle (2pt) node[anchor=west] {\{1\}};
\filldraw[black] (3,0) circle (2pt) node[anchor=west] {};
\filldraw[black] (2.5980762114,-1.5) circle (2pt) node[anchor=west] {\{5,6\}};
\filldraw[black] (1.5,-2.5980762114) circle (2pt) node[anchor=north] {\{7,8\}};
\filldraw[black] (0,-3) circle (2pt) node[anchor=north] {\{9\}};
\filldraw[black] (-1.5,-2.5980762114) circle (2pt) node[anchor=north] {};
\filldraw[black] (-2.5980762114,-1.5) circle (2pt) node[anchor=west] {};
\filldraw[black] (-3,0) circle (2pt) node[anchor=east] {\{11,12,13\}};
\filldraw[black] (-2.5980762114,1.5) circle (2pt) node[anchor=east] {\{10,14\}};
\filldraw[black] (-1.5,2.5980762114) circle (2pt) node[anchor=south] {\{3,4\}};
\draw (0,0) circle (3);

\draw (12.5,0) circle (3);
\filldraw[red] (12.5+0,3) circle (2pt) node[anchor=south] {\{2\}};
\filldraw[red] (12.5+1.5,2.5980762114) circle (2pt) node[anchor=south] {};
\filldraw[red] (12.5+2.5980762114,1.5) circle (2pt) node[anchor=west] {\{1\}};
\filldraw[red] (12.5+3,0) circle (2pt) node[anchor=west] {};
\filldraw[blue] (12.5+2.5980762114,-1.5) circle (2pt) node[anchor=west] {\{5,6\}};
\filldraw[blue] (12.5+1.5,-2.5980762114) circle (2pt) node[anchor=north] {\{7,8\}};
\filldraw[red] (12.5+0,-3) circle (2pt) node[anchor=north] {\{9\}};
\filldraw[red] (12.5-1.5,-2.5980762114) circle (2pt) node[anchor=north] {};
\filldraw[blue] (12.5-2.5980762114,-1.5) circle (2pt) node[anchor=west] {};
\filldraw[blue] (12.5-3,0) circle (2pt) node[anchor=east] {\{11,12,13\}};
\filldraw[blue] (12.5-2.5980762114,1.5) circle (2pt) node[anchor=east] {\{10,14\}};
\filldraw[blue] (12.5-1.5,2.5980762114) circle (2pt) node[anchor=south] {\{3,4\}};

\draw[->] (4.3,2) -- (7.2,2);

\draw[->] (-3.5,-9) -- (1.5,-9);

\filldraw[red] (8.5+0,3-9) circle (2pt) node[anchor=south] {\{2\}};
\filldraw[red] (8.5+1.5,2.5980762114-9) circle (2pt) node[anchor=south] {};
\filldraw[red] (8.5+2.5980762114,1.5-9) circle (2pt) node[anchor=west] {\{1\}};
\filldraw[red] (8.5+3,0-9) circle (2pt) node[anchor=west] {};
\filldraw[blue] (8.5+2.5980762114,-1.5-9) circle (2pt) node[anchor=west] {\{5,6\}};
\filldraw[blue] (8.5+1.5,-2.5980762114-9) circle (2pt) node[anchor=north] {\{7,8\}};
\filldraw[red] (8.5+0,-3-9) circle (2pt) node[anchor=north] {\{9\}};
\filldraw[red] (8.5-1.5,-2.5980762114-9) circle (2pt) node[anchor=north] {};
\filldraw[blue] (8.5-2.5980762114,-1.5-9) circle (2pt) node[anchor=west] {};
\filldraw[blue] (8.5-3,0-9) circle (2pt) node[anchor=east] {\{11,12,13\}};
\filldraw[blue] (8.5-2.5980762114,1.5-9) circle (2pt) node[anchor=east] {\{10,14\}};
\filldraw[blue] (8.5-1.5,2.5980762114-9) circle (2pt) node[anchor=south] {\{3,4\}};
\draw (8.5,-9) circle (3);

\draw [green,thick,<-,domain=-260:20] plot ({8.5+3.5*cos(\x)}, {-9+3.5*sin(\x)});

\end{tikzpicture}
\caption{Reading off the second winding vector.} \label{fig6}
\end{figure}
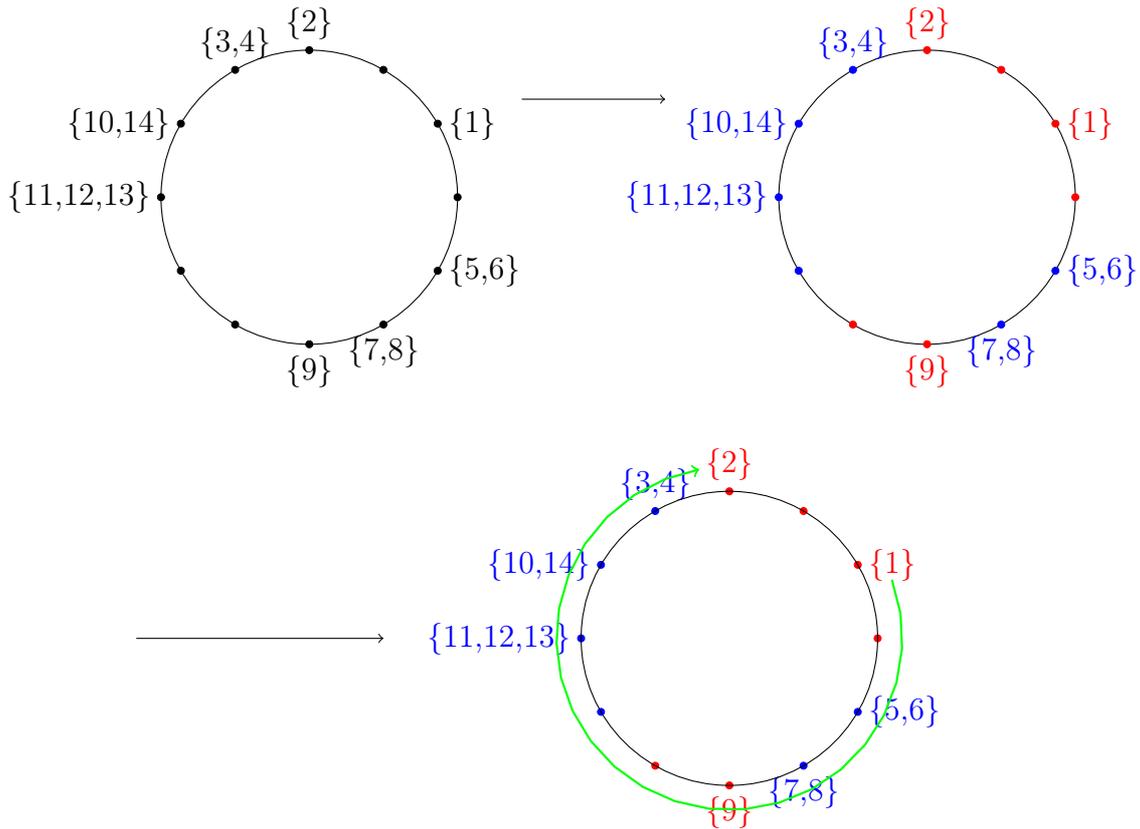
We saw that a second winding vector $v=(v_1,v_2,\cdots,v_n)$ satisfies $v_1+\cdots+v_n=(k-rm)d$. And it also satisfies $0 \leq v_i \leq k-rm-1$ if $i \notin T$, and $1 \leq v_i \leq k-rm$ if $i \in T.$ 

It turns out these are the only restrictions for the second winding vectors of the elements of $\hat{K}_r(T)$.

\begin{Prop} \label{prop5}
Elements of $\hat{K}_r(T)$, where $|T|=m$, are in bijection with elements of
$\{(v_1,v_2,\cdots,v_n) \in \mathbb{Z}^n \mid 0 \leq v_i \leq k-rm-1 \hspace{4pt} \text{if} \hspace{4pt} i \notin T, 1 \leq v_i \leq k-rm \hspace{4pt} \text{if} \hspace{4pt} i \in T, \hspace{4pt} v_1+\cdots+v_n=(k-rm)d\}$.
\end{Prop}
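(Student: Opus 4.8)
The plan is to follow the template of the proof of Proposition~\ref{prop2}: the paragraph preceding the statement already shows that sending $P\in\hat K_r(T)$ to its second winding vector is a well-defined map $\Phi$ into the displayed set, so it suffices to construct an inverse $\Psi$. First dispose of degenerate cases: if $T=\varnothing$ the claim is exactly Proposition~\ref{prop2}, and if $k\le rm$ then the $rm$ red spots already exhaust (or overfill) the circle while $[n]\setminus T\ne\varnothing$ still needs room, so $\hat K_r(T)=\varnothing$; correspondingly the constraint $1\le v_i\le k-rm$ for $i\in T$ is then unsatisfiable, so the displayed set is empty too. Assume henceforth $\varnothing\ne T$ and $k>rm$.

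To build $\Psi(v)$: draw $k-rm$ spots on a circle, clockwise, all colored \textbf{blue}, and place the elements $1,2,\dots,n$ one at a time while maintaining a clockwise cursor. Put element $1$ on the first blue spot if $1\notin T$; if $1\in T$, instead insert a fresh \textbf{red run} --- a spot holding $\{1\}$ followed by $r-1$ empty red spots --- and set the cursor just after it. Having placed element $i$, advance the cursor clockwise until it has passed exactly $v_i$ blue spots; then, if $i+1\notin T$, add $i+1$ to the block occupying the blue spot now under the cursor (creating this block if the spot was empty), and if $i+1\in T$, insert a fresh red run for $\{i+1\}$ at the cursor, i.e.\ immediately after the last blue spot passed and ahead of whatever already occupies that gap. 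Here the two inequalities on $v$ are exactly what makes the cursor step meaningful: when $i\in T$ the cursor starts inside a red region, so $v_i\ge1$ is needed to reach an actual blue spot, and when $i\notin T$ the cursor starts on element $i$'s own blue spot, so $v_i\le k-rm-1$ is needed for it not to loop all the way back to that spot. After $n$ steps the circle carries $k-rm$ blue spots and $m$ red runs of $r$ spots, hence $k$ spots and circumference $k$; each element is placed exactly once; and the total count of blue spots traversed over the full cycle is $v_1+\cdots+v_n=(k-rm)d$, which is $d$ times the number of blue spots in one loop, so the winding number is $d$. Each $\{t_j\}$ is a singleton block followed by $r-1$ empty spots, hence $r$-bad, so $\Psi(v)\in K_r(\{\{t_1\},\dots,\{t_m\}\})$.

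It remains to check $\Psi(v)\in\hat K_r(T)$, i.e.\ that $\Psi(v)$ has no increasing $r$-packed sequence of length $>1$; since such a sequence contains an increasing $r$-packed pair, it is enough to rule out consecutive $T$-singlet blocks $\{t_a\},\{t_b\}$ with $l_a=r$ and $t_a<t_b$. Two $T$-singlet blocks can be consecutive with $l_a=r$ only if they lie in the same gap between two consecutive blue spots (a blue spot between them would either be a block, making them non-consecutive, or an empty spot, forcing $l_a>r$), and within any fixed gap the red runs were inserted one at a time, in increasing order of their $T$-element, each ahead of those already present, so reading the gap front-to-back lists the $T$-singlets in strictly \emph{decreasing} order. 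Hence no increasing $r$-packed pair occurs and $\Psi(v)\in\hat K_r(T)$. Finally $\Phi(\Psi(v))=v$, because the canonical blue/red coloring of $\Psi(v)$ (each $\{t_j\}$ with the $r-1$ empty spots directly after it) is precisely the one we built in, and by construction the clockwise trip from $i$ to $i+1$ in $\Psi(v)$ passes $v_i$ blue spots; and $\Psi(\Phi(P))=P$, because reconstruction places element $i+1$ exactly $v_i$ blue spots clockwise from element $i$ --- its position in $P$ --- and re-inserts each red run in the correct gap and in the decreasing-within-gap order, which for $P\in\hat K_r(T)$ is the order in which the $T$-singlets actually appear there (by the argument just given, read in reverse). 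Thus $\Phi$ and $\Psi$ are mutually inverse bijections.

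I expect the main obstacle to be making the construction genuinely well-defined and verifying that the two inequality constraints on $v$ are exactly the right ones --- in particular, pinning down where a newly inserted red run sits relative to material already in its gap --- together with the bookkeeping identifying ``no increasing $r$-packed sequence'' with the decreasing order of $T$-singlets inside each gap, which is the hinge of both directions of the bijection.
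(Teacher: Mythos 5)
Your proposal is correct and is essentially the paper's own argument: the paper also inverts the second-winding-vector map by placing the elements around a circle of $k-rm$ blue spots according to $v$ and then turning each element of $T$ into a singleton block followed by $r-1$ empty spots, inserted into the gap after its blue spot in decreasing order within each gap, which is exactly what rules out increasing $r$-packed sequences. Your only deviation is doing this insertion on the fly (and checking both composites explicitly) rather than in a second pass, which is a cosmetic difference.
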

\begin{proof}
The forward direction is done by the second winding vector. For the reverse direction, we should recover the decorated ordered set partition (in $\hat{K}_r(T)$) whose second winding vector is the specified vector $(v_1,v_2,\cdots,v_n)$. First draw $(k-rm)$ spots on the circle (recall $|T|=m$) and put 1 in one spot. Having put $i$ in some spot, move clockwise $w_i$ spots and put $i+1$ in that spot. After placing every element, let's denote the resulting decorated ordered set partition with $P$. We construct $\tilde{P} \in \hat{K}_r(T)$ as follows. For each block $B$ of $P$ with $B \cap T \neq \phi$, let $B \cap T=\{i_1 < \cdots <i_s\}$. We replace $B$ with $B \backslash T$ and then add $(rs)$ spots immediately after $B \backslash T$ as follows: first a $T$-singlet block $\{i_s\}$ then $(r-1)$ empty spots then $T$-singlet block $\{i_{s-1}\}$ then  $(r-1)$ empty spots $\cdots$ $T$-singlet block $\{i_{1}\}$ then  $(r-1)$ empty spots. Resulting decorated ordered set partition $\tilde{P}$ belongs to  $\hat{K}_r(T)$ as all element in $T$ are in $T$-singlet blocks and there is no increasing $r$-packed sequence of length greater than 1 since we placed $i_1 < \cdots <i_s$ in a decreasing order.

It remains to prove the second winding vector $(\tilde{v_1},\tilde{v_2},\cdots,\tilde{v_n})$ of $\tilde{P}$ is the given vector $(v_1,v_2,\cdots,v_n)$. If $i$ and $(i+1)$ were in different blocks in $P$, then $v_i=\tilde{v_i}$ as we ignore red spots on the way. If $i$ and $(i+1)$ were in a same block in $P$ and $i \notin T$, then $v_i=0$. From the construction of $\tilde{P}$, there is no blue spot on the way from $i$ to $(i+1)$ except the starting spot so $\tilde{v_i}=0$.  If $i$ and $(i+1)$ were in a same block in $P$ and $i \in T$, then $v_i=k-rm$. Since $(i+1)$ is located behind $i$, to get from $i$ to $(i+1)$ in $\tilde{P}$ the path winds the circle and encounters every blue spot. Thus $\tilde{v_i}=k-rm$. We conclude that the second winding vector of $\tilde{P}$ is the given vector.  
\end{proof}
\begin{Ex}
Figure \ref{fig7} shows how to recover a decorated ordered set partition from a second winding vector as stated in Proposition \ref{prop5}. We are given $T=\{1,2,9\}$, the number $r=2$, and the second winding vector $v=(6,6,0,1,0,1,0,0,3,5,0,0,1,1)$. In the upper left figure, there are $6=k-r|T|$ spots ($k=12$) on the circle and 1 is in one spot. Then put elements according to the second winding vector. The upper right figure shows this. The elements in $T$ are denoted with a tilde. Consider the first block $\{\tilde{1},\tilde{2},3,4\}$. The numbers 3 and 4 will form a block and 1 and 2 will spread to the right into the space between blocks $\{\tilde{1},\tilde{2},3,4\}$ and $\{5,6\}$, making four new red spots. The same thing happens for the block $\{7,8,\tilde{9}\}$, making two new red spots. The lower figure is the picture for the resulting decorated ordered set partition in $\hat{K}_r(T)$. We recovered Example \ref{ex8}.
\end{Ex}
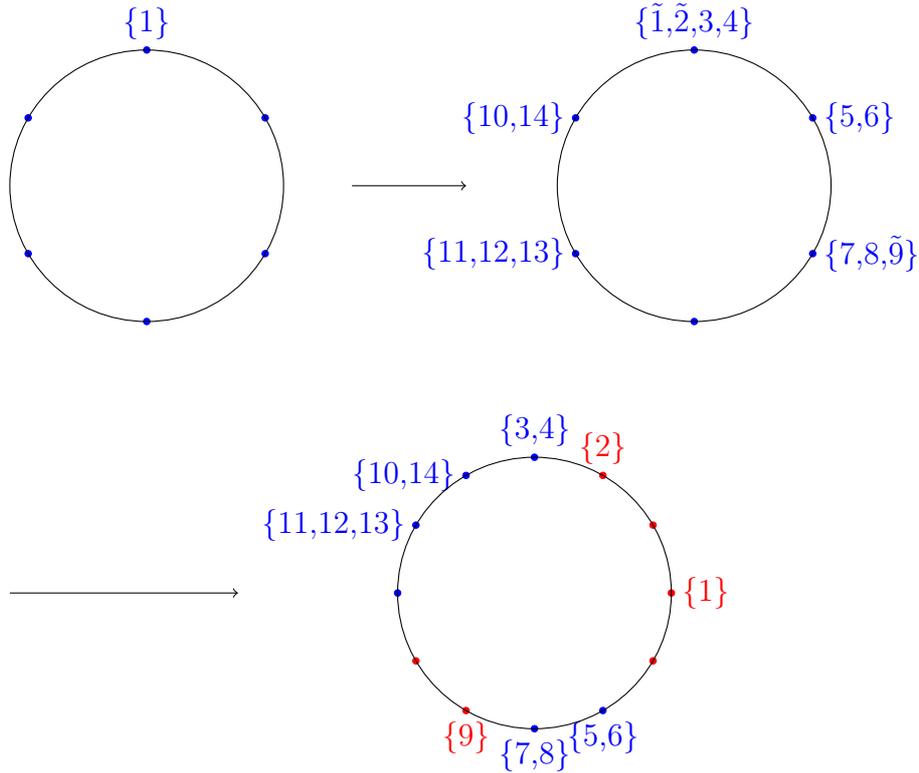
\begin{figure}[ht]
\begin{tikzpicture}[scale=0.6]
\filldraw[blue] (0,3) circle (2pt) node[anchor=south] {\{1\}};
\filldraw[blue] (2.5980762114,1.5) circle (2pt) node[anchor=west] {};
\filldraw[blue] (2.5980762114,-1.5) circle (2pt) node[anchor=west] {};
\filldraw[blue] (0,-3) circle (2pt) node[anchor=west] {};
\filldraw[blue] (-2.5980762114,-1.5) circle (2pt) node[anchor=north] {};
\filldraw[blue] (-2.5980762114,1.5) circle (2pt) node[anchor=east] {};

\draw (0,0) circle (3);

\draw[->] (4.5,0) -- (7,0);

\filldraw[blue] (12,3) circle (2pt) node[anchor=south] {\{$\tilde{1}$,$\tilde{2}$,3,4\}};

\filldraw[blue] (12+2.5980762114,1.5) circle (2pt) node[anchor=west]{\{5,6\}};
\filldraw[blue] (12+2.5980762114,-1.5) circle (2pt) node[anchor=west] {\{7,8,$\tilde{9}$\}};
\filldraw[blue] (12,-3) circle (2pt) node[anchor=north] {};
\filldraw[blue] (12-2.5980762114,-1.5) circle (2pt) node[anchor=east]{\{11,12,13\}};
\filldraw[blue] (12-2.5980762114,1.5) circle (2pt) node[anchor=east] {\{10,14\}};

\draw (12,0) circle (3);

\draw[->] (-3,-9) -- (2,-9);

\filldraw[blue] (8.5+0,3-9) circle (2pt) node[anchor=south] {\{3,4\}};
\filldraw[red] (8.5+1.5,2.5980762114-9) circle (2pt) node[anchor=south] {\{2\}};
\filldraw[red] (8.5+2.5980762114,1.5-9) circle (2pt) node[anchor=west] {};
\filldraw[red] (8.5+3,0-9) circle (2pt) node[anchor=west] {\{1\}};
\filldraw[red] (8.5+2.5980762114,-1.5-9) circle (2pt) node[anchor=west] {};
\filldraw[blue] (8.5+1.5,-2.5980762114-9) circle (2pt) node[anchor=north] {\{5,6\}};
\filldraw[blue] (8.5+0,-3-9) circle (2pt) node[anchor=north] {\{7,8\}};
\filldraw[red] (8.5-1.5,-2.5980762114-9) circle (2pt) node[anchor=north] {\{9\}};
\filldraw[red] (8.5-2.5980762114,-1.5-9) circle (2pt) node[anchor=west] {};
\filldraw[blue] (8.5-3,0-9) circle (2pt) node[anchor=east] {};
\filldraw[blue] (8.5-2.5980762114,1.5-9) circle (2pt) node[anchor=east] {\{11,12,13\}};
\filldraw[blue] (8.5-1.5,2.5980762114-9) circle (2pt) node[anchor=east] {\{10,14\}};
\draw (8.5,-9) circle (3);

\end{tikzpicture}
\caption{Constructing the decorated ordered set partition associated to the second winding vector $v=(6,6,0,1,0,1,0,0,3,5,0,0,1,1)$.} \label{fig7}
\end{figure}

For a second winding vector $v=(v_1,\cdots,v_n)$, let $v'=(v'_1,\cdots,v'_n)$ be a vector such that $v'_i=v_i$ if $i \notin T$, and $v'_i=v_i-1$ if $i \in T$. By the property of a second winding vector, we have $0 \leq v'_i \leq (k-rm-1)$ and $v'_1+\cdots+v'_n=(k-rm)d-|T|=(k-rm)d-m$.
So the number of such $v'$ is $\binom{n}{(k-rm)d-m}_{k-rm}$ which gives

\begin{equation} \label{6}
H_{r}(T)=(-1)^{|T|}|\hat{K}_r (T)|=(-1)^{m}\binom{n}{(k-rm)d-m}_{k-rm}.
\end{equation}

\vspace{10pt}

\textit{Proof of Conjecture \ref{conj2})} By Proposition \ref{prop3}, and the equation (\ref{6}), the number of $r$-hypersimplicial decorated ordered set partitions (of type $(k,n)$ with winding number $d$) is 
 $$\sum_{T \subseteq \{1,2,...,n\}} H_r (T)=\sum_{m\geq0}\left(\sum_{|T|=m} H_r (T)\right)=\sum_{m\geq0} (-1)^{m} \binom{n}{m} \binom{n}{(k-rm)d-m}_{k-rm}.$$
 
 Now comparing with the formula (\ref{5}), we obtain  $Conjecture$ \ref{conj2}. By specializing to $r=1$ we obtain $Conjecture$ \ref{conj1}.\qed

 \vspace{5mm}
 \textbf{Acknowledgments:} The author would like to thank Lauren Williams for pointing out this problem and her helpful comments on drafts of this paper, and Melissa Sherman-Bennet for helping me revise this paper. The author is also grateful to Nick Early for helpful explanations about the background of this conjecture.

\end{document}